
\documentclass[11pt,a4paper]{article}
\usepackage[T1]{fontenc}
\usepackage[utf8]{inputenc}
\usepackage{eurosym}
\usepackage{amsfonts}
\usepackage[margin=1.1in]{geometry}
\usepackage{amsmath,amsthm,amssymb,mathrsfs}
\usepackage{graphicx}
\usepackage{booktabs}
\usepackage{caption}
\usepackage{subcaption}
\usepackage{xcolor}
\usepackage[colorlinks=true, linkcolor=RoyalBlue, citecolor=RoyalBlue, urlcolor=RoyalBlue]{hyperref}
\usepackage{enumitem}
\usepackage{listings}
\usepackage{algorithm}
\usepackage{algorithmic}
\usepackage{float}
\usepackage[section]{placeins}
\usepackage{authblk}
\usepackage{microtype}
\usepackage{titlesec}
\usepackage{indentfirst}
\usepackage[nameinlink,capitalize]{cleveref}
\usepackage{mathpazo}
\usepackage[scaled=0.95]{helvet}
\usepackage{courier}

\setcounter{MaxMatrixCols}{10}

\definecolor{RoyalBlue}{RGB}{0, 35, 102}
\titleformat{\section}{\large\bfseries\scshape}{\thesection.}{0.5em}{}
\titleformat{\subsection}{\normalsize\bfseries}{\thesubsection.}{0.5em}{}
\lstset{
    language=Python,
    basicstyle=\ttfamily\small,
    keywordstyle=\color{blue!70!black}\bfseries,
    stringstyle=\color{red!70!black},
    commentstyle=\color{green!50!black}\itshape,
    numbers=left,
    numberstyle=\tiny\color{gray},
    stepnumber=1,
    numbersep=10pt,
    backgroundcolor=\color{gray!5},
    showspaces=false,
    showstringspaces=false,
    showtabs=false,
    frame=lines,
    rulecolor=\color{black!30},
    tabsize=4,
    captionpos=b,
    breaklines=true,
    breakatwhitespace=false,
    title=\lstname,
    escapeinside={\%*}{*)}
}
\theoremstyle{plain}
\newtheorem{theorem}{Theorem}[section]

\theoremstyle{definition}
\newtheorem{definition}[theorem]{Definition}

\theoremstyle{remark}
\newtheorem{remark}[theorem]{Remark}
\numberwithin{equation}{section}

\begin{document}

\title{{\LARGE \textbf{A Bernoulli-Type Localization Principle for Nonlinear
Diffusion Equations: Structural Description of Solutions and First
Application to Image Processing}}}
\author{\textbf{Dragos-Patru Covei}}
\date{{\small \today}}
\maketitle

\begin{abstract}
\noindent This paper introduces a genuinely new approach to the analysis of
nonlinear diffusion equations of porous medium type with time-dependent
growth rates: the \emph{localization of solutions} through an associated
Bernoulli-type ordinary differential equation. This principle---which we
term the \emph{Bernoulli localization principle}---provides, for the first
time, a complete structural description of solutions to the
initial-boundary-value problem by explicitly decoupling the temporal
dynamics, governed by a Bernoulli-type ODE whose solution is available in
closed form, from the spatial profile, determined by a time-independent
sublinear elliptic equation of Brezis--Oswald type. The resulting
decomposition changes the way this class of partial differential equations
has been viewed: rather than analyzing the full parabolic PDE by purely
qualitative methods, one solves a scalar ODE and a stationary elliptic
problem, and their combination yields a complete, explicit structural
description of every solution. We establish three main results: (i)
existence and uniqueness of separable classical solutions via the
transformation of the parabolic problem into an elliptic framework coupled
with the Bernoulli ODE (Theorem~\ref{th1}); (ii) a comparison principle
showing that the initial ordering of data and growth rates is preserved
throughout evolution (Theorem~\ref{th-comp}); and (iii) a general existence
and uniqueness result for non-separable solutions via monotone iteration
based on the subsolution--supersolution technique, with explicit
localization bounds constructed from the Bernoulli ODE
(Theorem~\ref{th2}). To demonstrate the applied character of this
framework, we present what is, to our knowledge, the \emph{first
application} of a porous-medium-type diffusion equation to digital image
processing, specifically image denoising, where the model achieves results
that match or exceed those of the established Perona--Malik anisotropic
diffusion filter. A complete and transparent description of the Python
implementation is provided. The framework is also relevant to other
applied domains---physics (thermal diffusion in porous media), biology
(population dynamics, tumor growth), economics (diffusion of innovations,
wealth distribution), and engineering (groundwater flow, materials
processing)---via concrete models and verifiable predictions.
\end{abstract}


\affil{\small Department of Applied Mathematics, The Bucharest University of Economic Studies\\
Piata Romana, 1st district, Postal Code: 010374, Postal Office: 22, Romania\\
\texttt{dragos.covei@csie.ase.ro}}

\vspace{1em}

\noindent \textbf{Keywords:} \textit{Nonlinear diffusion; Bernoulli
localization principle; Structural description of solutions; Image
denoising; Porous medium equation; Gurtin--MacCamy model; Existence and
uniqueness; Monotone iteration; Perona--Malik comparison; Cross-disciplinary
applications.}

\vspace{0.5em}

\noindent \textbf{MSC 2020:} \textit{35K55; 35K57; 35J20; 35B40; 35D30;
92D25; 94A08.}

\vspace{2em}


\section{Introduction}

\subsection{Motivation and the classical viewpoint}

The theory of nonlinear diffusion equations has been a central pillar of
mathematical analysis and applied mathematics for over half a century.
Equations of the form $\partial _{t}v=\Delta v^{\alpha }$ with $\alpha >1$%
---the so-called porous medium equations---arise naturally in contexts
ranging from the flow of gases through porous rock formations to the
dispersal of biological populations in heterogeneous environments \cite%
{V, GM, Bar}. When augmented by a reaction or growth term, these equations
become indispensable in modeling processes where diffusion and proliferation
interact: thermal conduction in materials with temperature-dependent
conductivity \cite{DiBenedetto}, density-dependent dispersal of biological
species \cite{GM, Murray, Okubo}, groundwater flow through soils with
nonlinear permeability \cite{Bear}, and even the propagation of innovations
through social networks \cite{Rogers}.

Despite decades of intensive study, the methods available for understanding
the \emph{structure} of solutions to these equations have remained largely
indirect. Comparison principles, energy estimates, regularity theory, and
asymptotic analysis provide existence, uniqueness, and qualitative
information, but they typically do not furnish an \emph{explicit structural
description} of the solution. The celebrated work of Brezis and Oswald \cite%
{BO} established the existence and uniqueness of solutions to sublinear
elliptic problems, while Brezis and Kamin \cite{BK} extended these results
to unbounded domains; the monograph of V\'{a}zquez \cite{V} gave a
comprehensive treatment of existence, uniqueness, regularity, and
asymptotics for the porous medium equation; and the foundational framework of
Gurtin and MacCamy \cite{GM} provided the biological underpinning of
density-dependent diffusion. In all these approaches, however, the solution
is characterized \emph{implicitly}: it exists, it is unique, it has certain
qualitative properties---but its internal structure remains opaque.

\subsection{A new idea: the Bernoulli localization principle}

In this paper we introduce a genuinely new approach that fundamentally
changes this picture. The central observation is disarmingly simple yet far-%
reaching: the solutions of the nonlinear parabolic problem can be \emph{%
localized}---that is, explicitly described, bounded, and structurally
characterized---by means of an associated Bernoulli-type ordinary
differential equation. This localization principle rests on a decomposition
of the solution into a temporal factor $S(t)$, governed by a scalar
Bernoulli ODE with a closed-form solution, and a spatial profile $%
u^{1/\alpha }(x)$, determined by a stationary elliptic equation. The
combination of these two factors provides a \emph{complete structural
description} of the solution that is unprecedented for this class of
equations.

To be precise, we study the nonlinear diffusion equation%
\begin{equation}
\frac{\partial v}{\partial t}(x,t)-\Delta v^{\alpha }(x,t)=\mu \left(
t\right) v(x,t),\quad \text{in }\Omega \times (0,T),  \label{nd}
\end{equation}%
where $\alpha >1$, $T>0$ (possibly infinite), $\mu :\left[ 0,\infty \right)
\rightarrow \left( 0,\infty \right) $ is a continuous time-dependent
growth rate (additionally bounded whenever $T$ is infinite), and $%
\Omega \subset \mathbb{R}^{N}$ ($N\geq 1$) is a bounded smooth domain with
hostile boundaries ($\partial \Omega $). Problem \eqref{nd} is supplemented
with boundary and initial conditions:%
\begin{align}
v(x,t)& =0,\quad \text{on }\partial \Omega \times (0,T),  \label{bnd1} \\
v(x,0)& =v_{0}(x),\quad \text{on }\Omega .  \label{bnd2}
\end{align}

Here, $v(x,t)$ represents the evolving population density (or, analogously,
temperature, concentration, or image intensity), $\Delta v^{\alpha }(x,t)$
embodies a power-law diffusion process, $\alpha $ governs the nonlinearity
of diffusion, $\mu (t)$ represents the time-dependent growth or decay rate, $%
T$ defines the observation time frame, the boundary condition \eqref{bnd1}
ensures the solution vanishes at the boundary of the domain, and the initial
condition \eqref{bnd2} specifies the initial state.

The key methodological insight of our study is that a
separation-of-variables ansatz, combined with the identification of the
temporal factor as the solution of a Bernoulli-type ODE, yields not merely
an existence result but an \emph{explicit structural formula} for the
solution. Specifically, we show that the problem \eqref{nd}--\eqref{bnd1}--%
\eqref{bnd2} admits solutions of the form%
\begin{equation}
v_{\gamma }\left( x,t\right) =S\left( t\right) u^{\frac{1}{\alpha }}\left(
x\right) ,\quad S\left( t\right) =\left[ e^{\left( 1-\alpha \right)
\int_{0}^{t}\mu \left( s\right) ds}\left( \gamma ^{1-\alpha }+(\alpha
-1)\int_{0}^{t}e^{-\left( 1-\alpha \right) \int_{0}^{\tau }\mu \left(
s\right) ds}d\tau \right) \right] ^{\frac{1}{1-\alpha }},  \label{2}
\end{equation}%
where $u\left( x\right) $ is the unique positive solution of the associated
elliptic problem%
\begin{equation}
\left\{ 
\begin{array}{ccc}
-\Delta u\left( x\right) =u^{\frac{1}{\alpha }}\left( x\right) & in & \Omega
, \\ 
u\left( x\right) >0 & in & \Omega , \\ 
u\left( x\right) =0 & on & \partial \Omega ,%
\end{array}%
\right.  \label{dir}
\end{equation}%
and the initial condition is satisfied as%
\begin{equation}
v_{0}(x)=v_{\gamma }\left( x,0\right) =S\left( 0\right) u^{\frac{1}{\alpha }%
}\left( x\right) ,  \label{bc0}
\end{equation}%
with $S\left( 0\right) =\gamma \in \left( 0,\infty \right) $ given.

This representation is not merely a calculational convenience; it reveals
the \emph{structural anatomy} of the solution. The temporal dynamics are
entirely governed by a scalar Bernoulli-type ODE, while the spatial profile
is determined once and for all by a stationary sublinear elliptic problem.
This decomposition has several profound consequences: (a)~the long-time
behavior is immediately readable from the asymptotics of $S(t)$;
(b)~parameter sensitivity is transparent, since $\alpha $ affects both $S$
and $u$ in explicitly traceable ways; and (c)~numerical implementation
reduces to solving an elliptic equation (a mature, well-understood task)
plus evaluating the closed-form expression for $S(t)$.

\subsection{Literature and context}

It is well known that problem \eqref{nd}, together with the boundary
conditions \eqref{bnd1}--\eqref{bnd2}, is typically analyzed within the
theoretical framework introduced by Gurtin and MacCamy \cite{GM},
which provides a foundational approach to diffusion processes with
density-dependent dispersal. Subsequent research has extended this setting
in various directions; for instance, when $\mu \left( t\right) $ is a
constant and $\Omega =\mathbb{R}^{N}$, the problem has been studied in
detail by He \cite{H}. When $\mu \left( t\right) =0$, the equation %
\eqref{nd} reduces to the classical porous medium equation,%
\begin{equation*}
\frac{\partial v}{\partial t}(x,t)=\Delta v^{\alpha }(x,t),\quad \text{in }%
\Omega \times (0,T),
\end{equation*}%
well known for its degenerate parabolic structure, finite-speed propagation, and rich regularity theory; see \cite{K,V,V2}
for detailed treatments.

In biological systems, nonlinear diffusion equations capture the complex
dynamics of microbial populations where growth and dispersal interact in
nontrivial ways \cite{GM, Okubo, Murray}. Similar models describe thermal
processes in non-homogeneous materials, phase transitions, and the evolution
of microstructural defects \cite{V, DiBenedetto}. The foundational work of
Gurtin and MacCamy \cite{GM} introduced the density-dependent diffusion
model for biological populations, where the diffusion coefficient depends on
the population density itself, leading to equations of the form $%
v_{t}=\Delta v^{\alpha }$ with $\alpha >1$ (slow diffusion). The monograph
by V\'{a}zquez \cite{V} provides a comprehensive treatment of the
mathematical theory, while the work of Aronson and Peletier \cite{AP}
established fundamental results on the large-time behavior of solutions in
bounded domains.

The subsolution--supersolution method, systematically developed by Pao \cite%
{PAO} for parabolic systems, provides a powerful framework for establishing
existence and uniqueness in the general non-separable case. Recent progress
on population models with nonlinear survival structures has been reported in 
\cite{Covei1} and \cite{Covei2}. These contributions, while significant,
characterize solutions \emph{qualitatively}. What has been missing---and
what this paper provides---is a principle that makes the solution structure 
\emph{explicit}.

\subsection{Broader applicability}

The broader applicability of the Bernoulli localization framework extends
well beyond biological dynamics and encompasses several scientific and
engineering disciplines.

In \textbf{physics}, equations of the form \eqref{nd} describe thermal
diffusion in porous media where the thermal conductivity is a power law of
the temperature \cite{Bar}. The localization principle predicts explicit
upper and lower bounds on the temperature field at any time, given the
initial temperature profile.

In \textbf{biology}, density-dependent dispersal models such as \eqref{nd}
arise naturally in ecology \cite{Okubo, Murray, Fisher}, where $v(x,t)$
represents the density of a species and $\mu (t)$ captures time-varying
birth-death dynamics. The structural decomposition allows explicit
computation of maximum and minimum population densities at any time $t$.

In \textbf{economics and social sciences}, nonlinear diffusion models
capture the spread of innovations through a population \cite{Rogers}, where $%
v(x,t)$ represents adoption density and the power-law diffusion reflects
peer-influence effects. The localization principle yields verifiable bounds
on market penetration as a function of time.

In \textbf{engineering}, the porous medium equation governs groundwater flow 
\cite{Bear}, oil recovery from reservoirs, and materials processing (e.g.,
sintering). In each of these contexts, the Bernoulli localization principle
provides quantitative predictions that can be tested against field data or
laboratory measurements.

We elaborate on these cross-disciplinary connections in
Section~\ref{cross}.

\subsection{Main contributions and novelty}

In summary, this paper establishes three key theoretical results
complemented by a first application to image processing:

\begin{itemize}
\item \textbf{Theorem \ref{th1}} (Bernoulli localization for separable
solutions)\textbf{:} An existence and uniqueness result for separable
solutions, obtained by transforming the parabolic problem into an elliptic
equation coupled with a Bernoulli-type ordinary differential equation. The
solution is given in closed form by \eqref{2}.

\item \textbf{Theorem \ref{th-comp}} (Comparison principle)\textbf{:} A
comparison result with respect to the initial conditions and growth rates,
showing that the population distribution preserves its initial ordering
throughout evolution.

\item \textbf{Theorem \ref{th2}} (Localization bounds for general
solutions)\textbf{:} A general existence and uniqueness result for non-%
separable solutions, established via the monotone iteration method based on
subsolution--supersolution techniques, where the sub- and supersolutions are
explicitly constructed from the Bernoulli ODE, thereby providing concrete
localization bounds.

\item \textbf{Image denoising application:} The first use of a porous-medium-%
type diffusion equation for image denoising, with a transparent
line-by-line explanation of the Python implementation, demonstrating
performance comparable to or exceeding the classical Perona--Malik filter.
\end{itemize}

\noindent The novelty of this work is threefold. First, the Bernoulli
localization principle itself is new: it provides a structural
decomposition that has not been available for this class of equations.
Second, the explicit localization bounds in Theorem~\ref{th2}---constructed
from the temporal factor $S(t)$ and the elliptic sub- and
supersolutions---go beyond classical qualitative comparison principles by
furnishing quantitatively sharp barriers. Third, the application to image
processing is new: the Gurtin--MacCamy diffusion model has not previously
been used for image denoising, and our results show that it outperforms the
classical Perona--Malik filter, especially at higher noise levels.

\subsection{Statement of main results}

We now present the main results in their abstract formulation.

\begin{theorem}
\label{th1}Let $\Omega \subset \mathbb{R}^{N}$ be a bounded,
arcwise-connected open set with a compact closure and boundary $\partial
\Omega$ of class $C^{3}$. Under these conditions, the nonlinear diffusion
problem \eqref{nd}--\eqref{bnd1} admits a unique solution $v_{\gamma}(x,t)$,
given by \eqref{2}, where $u\in C^{2}(\Omega) \cap C^{1}(\overline{\Omega})$
exists uniquely, and the initial boundary condition \eqref{bnd2} is given by %
\eqref{bc0}.
\end{theorem}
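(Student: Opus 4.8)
The plan is to show that the separable ansatz \eqref{2} is a genuine classical solution of \eqref{nd}--\eqref{bnd1}--\eqref{bnd2} and then to argue that it is the only separable solution. First I would substitute $v=S(t)\,u^{1/\alpha}(x)$ directly into \eqref{nd}. Since $v^{\alpha}=S^{\alpha}(t)\,u(x)$, the diffusion term is $\Delta v^{\alpha}=S^{\alpha}(t)\,\Delta u(x)$, while $\partial_t v=S'(t)\,u^{1/\alpha}(x)$. Using the elliptic identity $-\Delta u=u^{1/\alpha}$ from \eqref{dir} and dividing by $u^{1/\alpha}(x)>0$ (legitimate since $u>0$ in $\Omega$), the spatial variable drops out entirely and \eqref{nd} collapses to the scalar relation
\begin{equation*}
S'(t)=\mu(t)\,S(t)-S^{\alpha}(t),\qquad S(0)=\gamma .
\end{equation*}
Thus the parabolic problem decouples, exactly as the localization principle asserts, into the stationary problem \eqref{dir} and this Bernoulli-type ODE.

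Second, I would integrate the ODE in closed form. It is of Bernoulli type, so the substitution $w=S^{1-\alpha}$ linearizes it to $w'-(1-\alpha)\mu(t)\,w=\alpha-1$, a first-order linear equation. Multiplying by the integrating factor $\exp\!\big((\alpha-1)\int_0^t\mu(s)\,ds\big)$ and integrating from $0$ to $t$ with $w(0)=\gamma^{1-\alpha}$ reproduces precisely the bracketed expression in \eqref{2} after taking the $1/(1-\alpha)$ power. A short but essential check is that the quantity raised to the power $1/(1-\alpha)$ stays strictly positive for every $t$: indeed $\gamma^{1-\alpha}>0$, $\alpha-1>0$, and the integrand is positive, so $w(t)>0$ and $S$ is a well-defined, strictly positive $C^{1}$ function. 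When $T=\infty$ the boundedness of $\mu$ keeps $S$ bounded and bounded away from $0$, so no finite-time blow-up or extinction occurs.

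Third, I would invoke Brezis--Oswald \cite{BO} for the elliptic problem \eqref{dir}. Because $\alpha>1$ gives $1/\alpha\in(0,1)$, the nonlinearity $f(s)=s^{1/\alpha}$ is sublinear: the map $s\mapsto f(s)/s=s^{(1-\alpha)/\alpha}$ is strictly decreasing on $(0,\infty)$, with $\lim_{s\to 0^{+}}f(s)/s=+\infty>\lambda_1(\Omega)$ and $\lim_{s\to\infty}f(s)/s=0<\lambda_1(\Omega)$. These are exactly the structural hypotheses guaranteeing existence and uniqueness of a positive solution $u$. The stated regularity $u\in C^{2}(\Omega)\cap C^{1}(\overline{\Omega})$ then follows by bootstrapping: an $L^{\infty}$ bound (De Giorgi--Nash--Moser) gives $u^{1/\alpha}\in L^{\infty}$, hence $u\in W^{2,p}(\Omega)$ for all $p<\infty$ and, since $\partial\Omega\in C^{3}$, $u\in C^{1,\beta}(\overline{\Omega})$; interior $C^{2}$ regularity follows because $u>0$ on compact subsets of $\Omega$ makes $s\mapsto s^{1/\alpha}$ smooth there, permitting a further Schauder step.

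Finally, I would assemble the two factors, verify the initial condition $v_\gamma(x,0)=\gamma\,u^{1/\alpha}(x)=v_0(x)$, i.e.\ \eqref{bc0}, and close the uniqueness argument: any separable solution of the prescribed form must have its temporal factor solve the same Bernoulli ODE with datum $\gamma$ and its spatial factor solve \eqref{dir}, each of which is uniquely determined, so $v_\gamma$ is unique within this class. I expect the main obstacle to be the boundary regularity rather than the ODE or the abstract existence: because $f(s)=s^{1/\alpha}$ is only H\"older---not Lipschitz---continuous at $s=0$, precisely where $u$ vanishes on $\partial\Omega$, one cannot apply Schauder estimates naively up to the boundary, and the passage to $C^{1}(\overline{\Omega})$ must be routed through $W^{2,p}$ estimates and Sobolev embedding, using the $C^{3}$ smoothness of $\partial\Omega$ to control the boundary behavior.
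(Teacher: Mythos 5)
Your proposal is correct, and the temporal half coincides with the paper's: both substitute the ansatz $v=S(t)u^{1/\alpha}(x)$, observe the cancellation of the spatial variable via \eqref{dir}, and integrate the Bernoulli equation through $z=S^{1-\alpha}$ with the integrating factor $e^{-(1-\alpha)\int_0^t\mu(s)\,ds}$ to reach \eqref{2}; your explicit check that the bracketed quantity stays strictly positive (so $S$ is well defined and positive) is a step the paper leaves implicit, and is worth keeping. Where you genuinely diverge is the elliptic part. The paper does \emph{not} invoke the abstract Brezis--Oswald existence theorem directly: it constructs explicit barriers $\underline{u}=\sigma\phi_1$ (a scaled first eigenfunction) and $\overline{u}=cw$ (a scaled torsion function solving $-\Delta w=1$), verifies the sub/supersolution inequalities by hand, and runs the monotone iteration $-\Delta u^{k}=(u^{k-1})^{1/\alpha}$ from $\underline{u}$, using Brezis--Oswald only to order the iterates and to conclude uniqueness. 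This constructive route is deliberate---the author wants an algorithm directly implementable in code---whereas your route (Brezis--Oswald structural hypotheses $\lim_{s\to0^+}f(s)/s=+\infty$, $\lim_{s\to\infty}f(s)/s=0$, plus a $W^{2,p}\to C^{1,\beta}(\overline{\Omega})$ bootstrap and interior Schauder where $u>0$) is shorter, standard, and in fact more careful on regularity than the paper, which only gestures at ``standard regularity theory.'' Your observation that $s^{1/\alpha}$ is merely H\"older at $s=0$, forcing the passage to $C^{1}(\overline{\Omega})$ through $W^{2,p}$ estimates rather than boundary Schauder, correctly identifies why the theorem claims $C^{2}(\Omega)\cap C^{1}(\overline{\Omega})$ and not $C^{2}(\overline{\Omega})$. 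One small correction: for $T=\infty$ your claim that boundedness of $\mu$ keeps $S$ ``bounded away from $0$'' is not true in general (e.g.\ if $\mu(t)\to0$, the comparison with $S'=-S^{\alpha}$ shows $S(t)\to0$ algebraically); what the argument actually needs, and what your positivity check already delivers, is only that $S(t)>0$ for every finite $t$, so this aside does not affect the validity of the proof.
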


Our methodology for proving Theorem \ref{th1} departs from classical
strategies, such as those found in \cite{BK, GM, V2}, by harnessing a
transformation that simplifies the analysis even under challenging regimes.
This approach not only strengthens the theoretical framework but also paves
the way for robust numerical implementations.

\begin{theorem}
\label{th-comp} Let $v_{0},w_{0}\in C(\overline{\Omega })$ be two initial
conditions such that $v_{0}(x)\leq w_{0}(x)$ for all $x\in \overline{\Omega }
$. Assume that $\mu _{1}\left( t\right) $ and $\mu _{2}\left( t\right) $ are
continuous and bounded functions such that $\mu _{1}\left( t\right) \leq \mu
_{2}\left( t\right) $. If $v(x,t)$ and $w(x,t)$ are the unique classical
solutions to problem \eqref{nd}--\eqref{bnd2} corresponding to $(v_{0},\mu
_{1})$ and $(w_{0},\mu _{2})$, respectively, then 
\begin{equation}
v(x,t)\leq w(x,t)\quad \text{for all }\left( x,t\right) \in \overline{\Omega 
}\times \lbrack 0,T).
\end{equation}%
Intuitively, this result shows that if the initial population $v_{0}$ is
smaller than $w_{0}$ and the growth rate $\mu _{1}(t)$ is smaller than $\mu
_{2}(t)$, then solution $v$ stays below $w$ during the entire observation
period.
\end{theorem}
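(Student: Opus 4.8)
The plan is to exploit the structural decomposition of Theorem~\ref{th1}: because the elliptic problem \eqref{dir} that fixes the spatial profile $u^{1/\alpha}$ involves neither $\mu$ nor the initial datum, \emph{both} solutions carry the \emph{same} profile. Thus I would write $v(x,t)=S_v(t)\,u^{1/\alpha}(x)$ and $w(x,t)=S_w(t)\,u^{1/\alpha}(x)$, where $S_v,S_w$ are the temporal factors from \eqref{2} attached to $(\gamma_v,\mu_1)$ and $(\gamma_w,\mu_2)$, and their initial data obey \eqref{bc0}, $v_0=\gamma_v u^{1/\alpha}$, $w_0=\gamma_w u^{1/\alpha}$. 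Since $u^{1/\alpha}>0$ in $\Omega$, the hypothesis $v_0\le w_0$ is equivalent to $\gamma_v=S_v(0)\le S_w(0)=\gamma_w$, and multiplying back by the nonnegative factor $u^{1/\alpha}$ shows that the assertion $v\le w$ on $\overline{\Omega}\times[0,T)$ is \emph{equivalent} to the scalar inequality $S_v(t)\le S_w(t)$ on $[0,T)$. The problem therefore collapses to a one-dimensional comparison.

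Inserting the ansatz into \eqref{nd} and using $-\Delta u=u^{1/\alpha}$ shows that each temporal factor solves the Bernoulli ODE $S'=\mu(t)\,S-S^{\alpha}$, and \eqref{2} exhibits $S$ as a negative power of a manifestly positive quantity (since $\gamma>0$ and $\alpha>1$), so $S_v,S_w>0$ throughout $[0,T)$. From $\mu_1\le\mu_2$ and $S_v>0$ I obtain $S_v'=\mu_1 S_v-S_v^{\alpha}\le \mu_2 S_v-S_v^{\alpha}$, so $S_v$ is a \emph{subsolution} of the ODE governing $S_w$, with ordered initial data. The scalar comparison principle then gives $S_v\le S_w$. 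Concretely, setting $\phi:=S_w-S_v$ (so $\phi(0)=\gamma_w-\gamma_v\ge 0$) and applying the mean value theorem in the form $S_w^{\alpha}-S_v^{\alpha}=\alpha\,\xi^{\alpha-1}\phi$ for some $\xi$ between $S_v$ and $S_w$, one arrives at the linear relation
\begin{equation*}
\phi'=\bigl(\mu_2-\alpha\,\xi^{\alpha-1}\bigr)\phi+(\mu_2-\mu_1)\,S_v=:g(t)\,\phi+h(t),\qquad h\ge 0,
\end{equation*}
whose variation-of-constants solution $\phi(t)=e^{\int_0^t g}\bigl(\phi(0)+\int_0^t e^{-\int_0^\tau g}\,h\,d\tau\bigr)$ is nonnegative because every factor on the right is.

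The main obstacle is purely technical and is exactly where the explicit formula earns its keep. Since $\alpha>1$, the nonlinearity $S\mapsto S^{\alpha}$ is only \emph{locally} Lipschitz, so the comparison step is legitimate only if the solutions stay strictly positive and bounded (in particular, exhibit no finite-time blow-up). Both facts are read off \eqref{2}: positivity follows from the sign analysis above, and on any compact subinterval $[0,T']\subset[0,T)$ the closed form yields uniform bounds on $S_v,S_w$, hence on $\xi^{\alpha-1}$ and thus on $g$, so the integrating factor is finite and the estimate is rigorous. The only conceptual point is that the two hypotheses cooperate rather than compete: a larger initial amplitude ($\gamma_v\le\gamma_w$) and a larger growth rate ($\mu_1\le\mu_2$) both push the temporal factor \emph{upward}, which is precisely why $\phi(0)\ge 0$ and $h\ge 0$ enter with the same sign. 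One may cross-check this directly from \eqref{2}: after distributing the exponential, $S=\bigl(\gamma^{1-\alpha}e^{(1-\alpha)\int_0^t\mu}+(\alpha-1)\int_0^t e^{(1-\alpha)\int_\tau^t\mu}\,d\tau\bigr)^{1/(1-\alpha)}$ is visibly nondecreasing in $\gamma$ and, since $1-\alpha<0$, pointwise nondecreasing in $\mu$.
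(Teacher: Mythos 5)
There is a genuine gap, and it occurs at the very first step: you write $v(x,t)=S_v(t)\,u^{1/\alpha}(x)$ and $w(x,t)=S_w(t)\,u^{1/\alpha}(x)$, which presupposes that both solutions are \emph{separable} with initial data proportional to the fixed profile $u^{1/\alpha}$ from \eqref{dir}. But Theorem~\ref{th-comp} makes no such assumption: it concerns arbitrary classical solutions whose initial data $v_0,w_0\in C(\overline{\Omega})$ are merely ordered pointwise. For generic $v_0\leq w_0$ (say, $v_0$ not a constant multiple of $u^{1/\alpha}$), the solution is not of product form, your claimed equivalence ``$v_0\leq w_0$ iff $\gamma_v\leq\gamma_w$'' is meaningless, and the reduction to a scalar ODE comparison collapses. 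This generality is not incidental: the whole point of Theorem~\ref{th-comp} in the paper's architecture is that it feeds into Theorem~\ref{th2}, where a \emph{non-separable} solution must be compared against separable barriers; a comparison principle valid only between two separable solutions could not play that role. What you have actually proved is the special case of the theorem within the separable class --- equivalently, the monotonicity of the Bernoulli factor $S(t)$ in \eqref{2} with respect to $(\gamma,\mu)$, which is a correct and genuinely useful lemma (e.g.\ for ordering the barriers $v_\pm$), but it is not the stated theorem.

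The paper's proof works directly at the PDE level and avoids any structural ansatz. It first observes that, since $\mu_1\leq\mu_2$ and $v\geq 0$, the solution $v$ is a subsolution for the operator $\mathcal{L}_{\mu_2}[u]=\partial_t u-\Delta u^{\alpha}-\mu_2(t)u$, because $\mathcal{L}_{\mu_2}[v]=(\mu_1-\mu_2)v\leq 0$. It then sets $z:=v-w$, linearizes the nonlinearity by the integral mean-value identity $v^{\alpha}-w^{\alpha}=c(x,t)(v-w)$ with
\begin{equation*}
c(x,t)=\int_0^1 \alpha\,[\,w+\theta(v-w)\,]^{\alpha-1}\,d\theta\;\geq 0,
\end{equation*}
so that $z$ satisfies a linear parabolic inequality with nonnegative coefficient, $z(\cdot,0)\leq 0$, and $z=0$ on $\partial\Omega\times(0,T)$; the maximum principle for porous-medium-type equations then yields $z\leq 0$ on $\overline{\Omega}\times[0,T)$. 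Note that this mean-value linearization is the parabolic analogue of the step you performed for the scalar ODE (your $S_w^{\alpha}-S_v^{\alpha}=\alpha\,\xi^{\alpha-1}\phi$), so your instinct for handling the merely locally Lipschitz nonlinearity was right --- it simply has to be carried out on the difference of the two PDE solutions themselves, not on temporal factors that do not exist in the general case.
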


Theorem \ref{th1} and Theorem \ref{th-comp} lead to a new result that is
presented here for the first time.

\begin{theorem}
\label{th2}Let $\Omega \subset \mathbb{R}^{N}$ be a bounded,
arcwise-connected open set with a compact closure and boundary $\partial
\Omega $ of class $C^{3}$. Assume that $u_{-},u_{+}\in C^{2}(\Omega )\cap C(%
\overline{\Omega })$ satisfy 
\begin{align}
-\Delta u_{-}& \leq u_{-}^{1/\alpha }\quad \text{in }\Omega , & u_{-}>0\text{
in }\Omega ,& u_{-}=0\text{ on }\partial \Omega ,  \label{ell-sub} \\
-\Delta u_{+}& \geq u_{+}^{1/\alpha }\quad \text{in }\Omega , & u_{+}>0\text{
in }\Omega ,& u_{+}=0\text{ on }\partial \Omega ,  \label{ell-super}
\end{align}%
and $u_{-}\leq u_{+}$ in $\overline{\Omega }$. Fix $\gamma >0$ and let $S$
be the unique solution of 
\begin{equation}
S^{\prime }(t)+S^{\alpha }(t)\,=\mu \left( t\right) S\left( t\right) ,\qquad
S(0)=\gamma ,  \label{S-ODE}
\end{equation}%
as given by Equation \eqref{2}. We define $v_{-}(x,t)=S(t)u_{-}^{1/\alpha
}(x)$ and $v_{+}(x,t)=S(t)u_{+}^{1/\alpha }(x)$. If $v_{0}\in C(\overline{%
\Omega })$ satisfies $v_{-}(x,0)\leq v_{0}(x)\leq v_{+}(x,0)$ in $\overline{%
\Omega }$, then there exists a unique classical solution $v(x,t)$ to %
\eqref{nd}-\eqref{bnd2}, such that $v_{-}(x,t)\leq v(x,t)\leq v_{+}(x,t)$
for all $\left( x,t\right) \in \overline{\Omega }\times \left[ 0,T\right) $.
\end{theorem}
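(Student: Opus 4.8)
The plan is to recognize $v_{-}$ and $v_{+}$ as an ordered pair of sub- and supersolutions of \eqref{nd}--\eqref{bnd2} and to run a monotone iteration in the spirit of Pao \cite{PAO}, producing a classical solution trapped between the two explicit barriers, and then to settle uniqueness by an $L^{1}$-contraction estimate. First I would verify the barrier property by direct substitution. Writing $v_{+}=S(t)\,u_{+}^{1/\alpha}(x)$ and using $(v_{+})^{\alpha}=S^{\alpha}u_{+}$ together with the Bernoulli identity $S'-\mu S=-S^{\alpha}$ from \eqref{S-ODE}, one finds
\begin{equation*}
\partial_{t}v_{+}-\Delta (v_{+})^{\alpha}-\mu\,v_{+}
=(S'-\mu S)\,u_{+}^{1/\alpha}-S^{\alpha}\Delta u_{+}
=S^{\alpha}\bigl(-\Delta u_{+}-u_{+}^{1/\alpha}\bigr)\ge 0,
\end{equation*}
where the last inequality uses \eqref{ell-super} and $S>0$; the symmetric computation with \eqref{ell-sub} shows $\partial_{t}v_{-}-\Delta (v_{-})^{\alpha}-\mu\,v_{-}\le 0$. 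Both barriers vanish on $\partial\Omega$, the ordering $v_{-}\le v_{+}$ follows from $u_{-}\le u_{+}$ and $S>0$, and the hypothesis on $v_{0}$ supplies the sandwiched initial datum.

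Next I would construct the solution by monotone iteration on the divergence form $\partial_{t}v=\nabla\!\cdot\!\bigl(\alpha v^{\alpha-1}\nabla v\bigr)+\mu v$. The decisive structural fact is that any function caught in $[v_{-},v_{+}]$ satisfies $v\ge S(t)\,u_{-}^{1/\alpha}(x)>0$ on every compact subset of $\Omega\times(0,T)$, so the diffusion coefficient $\alpha v^{\alpha-1}$ is bounded away from zero there and the equation is uniformly parabolic on interior compacts. I would freeze the coefficient at the previous iterate and solve the linear problems
\begin{equation*}
\partial_{t}v^{(k)}-\nabla\!\cdot\!\bigl(\alpha (v^{(k-1)})^{\alpha-1}\nabla v^{(k)}\bigr)+K v^{(k)}=(K+\mu)\,v^{(k-1)},
\end{equation*}
with homogeneous boundary data and $v^{(k)}(\cdot,0)=v_{0}$, where $K$ is large enough to render the right-hand side monotone, starting from $v^{(0)}=v_{+}$ and, separately, from $v^{(0)}=v_{-}$. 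The maximum principle applied to each linear step, with $K$ absorbing the sign of the reaction, yields the standard nested chain $v_{-}\le\underline{v}^{(1)}\le\underline{v}^{(2)}\le\cdots\le\overline{v}^{(2)}\le\overline{v}^{(1)}\le v_{+}$, and monotone convergence produces ordered limits solving \eqref{nd}--\eqref{bnd2}.

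To finish existence I would upgrade regularity and then prove uniqueness. Interior Schauder estimates, legitimate because the limit is trapped between the positive barriers and hence uniformly parabolic on compacts, give $v\in C^{2,1}$ inside $\Omega\times(0,T)$, while continuity up to the parabolic boundary is inherited from the sandwich $v_{-}\le v\le v_{+}$, both barriers vanishing on $\partial\Omega$ and agreeing with $v_{0}$ at $t=0$. For uniqueness I would take two trapped classical solutions $v_{1},v_{2}$, subtract the equations, test against a smooth approximation of $\mathrm{sign}(v_{1}-v_{2})$, and use the T-accretivity of the porous-medium operator, which discards the diffusion contribution, to obtain
\begin{equation*}
\frac{d}{dt}\,\|v_{1}-v_{2}\|_{L^{1}(\Omega)}\le \mu(t)\,\|v_{1}-v_{2}\|_{L^{1}(\Omega)};
\end{equation*}
Gronwall and the common initial datum then force $v_{1}\equiv v_{2}$.

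The step I expect to be the main obstacle is controlling the degeneracy of the porous-medium diffusion where $v\to 0$ at $\partial\Omega$: the coefficient $\alpha v^{\alpha-1}$ vanishes there, so neither the solvability of each frozen-coefficient step nor the concluding Schauder estimates are uniform up to the boundary, and the multiplication by $\mathrm{sign}(v_{1}-v_{2})$ in the uniqueness argument is not licensed on the degenerate set. The remedy I would adopt is to confine all quantitative parabolic estimates to interior compacts, where the explicit lower barrier $v_{-}$ forces strict positivity and uniform ellipticity, and to handle the boundary layer only qualitatively through the sandwich by $v_{\pm}$, which already encode the correct vanishing rate; if needed, I would regularize the coefficient as $\alpha\bigl((v^{(k-1)})^{2}+\varepsilon\bigr)^{(\alpha-1)/2}$, solve the resulting non-degenerate problems, and pass $\varepsilon\to 0$ using the $\varepsilon$-uniform sandwich bounds.
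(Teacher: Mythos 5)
Your overall strategy coincides with the paper's own proof: the verification that $v_{\pm}=S(t)\,u_{\pm}^{1/\alpha}$ form an ordered subsolution--supersolution pair via the Bernoulli identity $S'-\mu S=-S^{\alpha}$ reproduces the paper's Steps 1--2 exactly, and the monotone iteration in Pao's framework is the paper's Step 3.4.1. Your uniqueness argument ($L^{1}$-contraction from the T-accretivity of $-\Delta(\cdot^{\alpha})$ plus Gronwall) differs from the paper's mean-value-plus-maximum-principle sketch, but it is standard and, if anything, more robust, since the paper's own uniqueness step suffers from the same boundary degeneracy (its coefficient $c(x,t)$ vanishes where both solutions vanish) that your contraction argument sidesteps. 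Your explicit handling of the degeneracy---interior positivity forced by the lower barrier $v_{-}$, $\varepsilon$-regularization of the coefficient---addresses a point the paper passes over by citing Pao.

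There is, however, one concrete gap: your iteration freezes the diffusion coefficient at the previous iterate, so consecutive iterates solve linear equations with \emph{different principal parts}, and the claimed nested chain does not follow from the maximum principle applied to each linear step. To compare $v^{(k)}$ with $v^{(k+1)}$, the difference $z=v^{(k)}-v^{(k+1)}$ satisfies an equation whose right-hand side contains the term $\nabla\cdot\bigl((a_{k-1}-a_{k})\nabla v^{(k)}\bigr)$, where $a_{j}=\alpha\,(v^{(j)})^{\alpha-1}$; even though the induction hypothesis gives $a_{k-1}\leq a_{k}$ pointwise, the divergence of a signed-coefficient flux has no pointwise sign, so the maximum principle yields nothing for $z$. (The very first comparison, $v_{-}\leq v^{(1)}$, does work, because there the frozen coefficient $a_{0}=\alpha\,v_{-}^{\alpha-1}$ is the same on both sides.) The repair is what the paper's scheme \eqref{iter-scheme} actually does: lag only the reaction term and keep the full quasilinear operator $\partial_{t}-\Delta(\cdot^{\alpha})$ in every iterate. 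Each step is then a porous-medium equation with a nonnegative source---no longer linear, but solvable by standard theory---and monotonicity of the chain follows from the comparison principle for that single fixed operator, where $\mathrm{sign}(v-w)=\mathrm{sign}(v^{\alpha}-w^{\alpha})$ makes the diffusion term cooperate. Alternatively, the Kirchhoff-type transform used by Pao and Ruan \cite{PAO} converts the problem into one admitting a genuinely linear monotone scheme. With that single repair your argument goes through, and is otherwise at or above the paper's own level of rigor.
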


A careful examination of Theorems \ref{th1} and \ref{th2} and their proofs
shows that both results play an essential role, even though Theorem \ref{th1}
is a particular case of Theorem \ref{th2}. Theorem \ref{th1} remains
significant because its proof lends itself to straightforward implementation
in a programming language through simplified numerical procedures, whereas
Theorem \ref{th2} applies to a more general situation in which the solution
is not necessarily separable. As an illustration, we present a well-known
result that highlights the distinct roles played by Theorems~\ref{th1} and %
\ref{th2}.

\begin{remark}
\label{rem} There always exists a continuous function 
\begin{equation*}
u_{-}^{+}:\overline{\Omega }\rightarrow \lbrack 0,\infty )
\end{equation*}%
such that 
\begin{equation}
u_{-}(x)\leq u_{-}^{+}(x)\leq u_{+}(x)\quad \text{for all }x\in \overline{%
\Omega }.  \label{arit}
\end{equation}%
For instance, for any continuous function $\theta :\overline{\Omega }%
\rightarrow \lbrack 0,1]$, the function 
\begin{equation*}
u_{-}^{+}(x):=(1-\theta (x))\,u_{-}(x)+\theta (x)\,u_{+}(x)
\end{equation*}%
satisfies Eq. \eqref{arit}.
\end{remark}

Therefore, Theorem~\ref{th-comp} strengthens both the first (Theorem~\ref%
{th1}) and third result (Theorem~\ref{th2}). First, it plays a key role in
the proof of Theorem~\ref{th2}. Second, when combined with Theorem~\ref{th1}%
, it enables the construction of a subsolution and supersolution pair, which
in turn makes the implementation of the results in a computer language
feasible.

\subsection{Organization of the paper}

The remainder of this paper is organized as follows. Section~\ref{loc-sec}
presents the Bernoulli localization principle in detail, explaining why it
constitutes a new structural framework for this class of equations.
Section~\ref{thp1} provides complete proofs of Theorems \ref{th1}--\ref{th2}%
. In Section~\ref{ef}, we derive the model from conservation laws and energy
considerations, following the framework of Gurtin and MacCamy.
Section~\ref{cross} discusses cross-disciplinary relevance, presenting
concrete models and verifiable predictions for physics, biology, economics,
and engineering. Section~\ref{a4} provides the complete Python
implementation for image denoising and presents a detailed comparison with
the Perona--Malik filter, including a transparent explanation of every
component of the code. Finally, Section~\ref{cf} presents conclusions and
future research directions.


\section{The Bernoulli Localization Principle: A New Structural Framework %
\label{loc-sec}}

In this section we describe in detail the central conceptual contribution of
this paper: the Bernoulli localization principle. We explain what it asserts,
why it is new, and how it changes the way one can think about the solutions
of porous-medium-type diffusion equations with time-dependent growth.

\subsection{The principle}

\begin{definition}[Bernoulli localization]
\label{def-loc} We say that the solutions of the parabolic problem %
\eqref{nd}--\eqref{bnd1}--\eqref{bnd2} are \emph{localized by the
Bernoulli ODE} if there exist functions $v_{-}(x,t)$ and $v_{+}(x,t)$ of
the product form%
\begin{equation}
v_{\pm }(x,t)=S(t)\,u_{\pm }^{1/\alpha }(x),  \label{loc-form}
\end{equation}%
where $S(t)$ is the unique positive solution of the Bernoulli-type ODE%
\begin{equation}
S^{\prime }(t)+S^{\alpha }(t)=\mu (t)\,S(t),\qquad S(0)=\gamma >0,
\label{loc-ode}
\end{equation}%
and $u_{-}$, $u_{+}$ are, respectively, a subsolution and a supersolution
of the stationary elliptic problem \eqref{dir} with $u_{-}\leq u_{+}$ in $%
\overline{\Omega }$, such that every classical solution $v$ of %
\eqref{nd}--\eqref{bnd1}--\eqref{bnd2} with initial data satisfying $%
v_{-}(x,0)\leq v_{0}(x)\leq v_{+}(x,0)$ is trapped between these barriers:%
\begin{equation}
v_{-}(x,t)\leq v(x,t)\leq v_{+}(x,t)\quad \text{for all }(x,t)\in 
\overline{\Omega }\times \lbrack 0,T).  \label{loc-ineq}
\end{equation}
\end{definition}

The power of this definition lies in the explicit, factored form of the
barriers. The temporal factor $S(t)$ is \emph{universal}: it depends only on
$\alpha $, $\mu (\cdot )$, and the initial parameter $\gamma $, not on the
spatial domain $\Omega $ or the spatial profile $u$. It is given in closed
form by Equation~\eqref{2}. The spatial factors $u_{\pm }^{1/\alpha }$ are
determined by the stationary elliptic problem on $\Omega $---a problem for
which existence, uniqueness, and regularity are classical \cite{BO, G}.

\subsection{Why the principle is new}

The novelty of the Bernoulli localization principle can be appreciated by
contrasting it with the approaches that have dominated the literature.

\begin{enumerate}
\item \textbf{Classical comparison principles} \cite{PAO, V} establish that
if a subsolution lies below a supersolution at $t=0$ and on the parabolic
boundary, then the same ordering persists for all time. These principles are
extremely powerful but remain \emph{qualitative}: they do not specify the
functional form of the sub- and supersolutions, nor do they relate the
temporal and spatial dependences explicitly. The Bernoulli localization
principle, by contrast, produces barriers of a specific, analytically
tractable product form, thereby converting a qualitative ordering into a
quantitative structural description.

\item \textbf{Energy and variational methods} \cite{V, DiBenedetto}
establish existence via minimization of suitable energy functionals. These
methods yield weak solutions and, through regularity bootstrapping, classical
solutions. However, the solutions so obtained remain implicit; the energy
functional tells us that a minimizer exists, but not what it looks like. The
Bernoulli localization principle, in contrast, tells us precisely what the
solution looks like: it is the product of a temporal Bernoulli factor and a
spatial elliptic profile.

\item \textbf{Self-similar and asymptotic analyses} \cite{AP, V2} provide
information about the long-time behavior of solutions, typically showing
convergence to a self-similar profile or a steady state. The Bernoulli
localization principle subsumes these results: the long-time behavior of $%
S(t)$ is immediately readable from the explicit formula \eqref{2},
yielding the asymptotic profile without further analysis.
\end{enumerate}

\subsection{Structural consequences}

The Bernoulli localization principle has several immediate structural
consequences.

\textbf{Complete structural description.} Every solution of %
\eqref{nd}--\eqref{bnd1}--\eqref{bnd2} lying in the ordered interval $%
[v_{-},v_{+}]$ is sandwiched between two explicitly known functions. In the
separable case, the solution \emph{is} such a product, and the description
is exact rather than merely bounding.

\textbf{Temporal--spatial decoupling.} The full parabolic PDE is replaced by
two independent, simpler problems: a scalar Bernoulli ODE for the temporal
factor and a stationary elliptic PDE for the spatial profile. Each can be
analyzed and solved independently, and their combination yields the solution
of the original problem.

\textbf{Explicit parameter dependence.} The dependence of the solution on
the parameters $\alpha $, $\gamma $, $\mu (\cdot )$, and $\Omega $ is
transparently readable from the formula \eqref{2} and the elliptic problem %
\eqref{dir}. This facilitates sensitivity analysis, parameter fitting in
applications, and the design of numerical experiments.

\textbf{Algorithmic accessibility.} The localization principle translates
directly into an algorithm: (i) solve the elliptic problem \eqref{dir}
by a standard numerical method (finite elements, finite differences, or the
iterative scheme in Theorem~\ref{th1}); (ii) evaluate $S(t)$ from the
closed-form expression \eqref{2}; (iii) form the product $v(x,t)=S(t)\,%
u^{1/\alpha }(x)$. No parabolic solver is needed for the separable case.


\section{Proof of the Main Results \label{thp1}}

In the following proof of Theorem \ref{th1}, we apply the successive
approximation method, because our goal is to implement the result using a
computer language.

\begin{proof}[Proof of Theorem \protect\ref{th1}]
We first prove the existence of solution to the problem \eqref{dir}. Let $%
\phi _{1}\in C^{2}(\overline{\Omega })$ be the first positive eigenfunction
corresponding to the first eigenvalue $\lambda _{1}$ of the problem%
\begin{equation*}
\left\{ 
\begin{array}{lll}
-\Delta \phi (x)=\lambda \phi (x), & in & \Omega , \\ 
\phi (x)=0, & on & \partial \Omega ,%
\end{array}%
\right.
\end{equation*}%
and $w\in C^{2}(\overline{\Omega })$ be the unique solution of the problem 
\begin{equation*}
\left\{ 
\begin{array}{ccc}
-\Delta w(x)=1, & in & \Omega , \\ 
w(x)=0, & on & \partial \Omega .%
\end{array}%
\right.
\end{equation*}%
We choose%
\begin{equation*}
\sigma =\frac{1}{\lambda _{1}^{\frac{\alpha }{\alpha -1}}\max_{x\in 
\overline{\Omega }}\phi _{1}\left( x\right) }\text{ and }c=\max \left\{
\sigma \lambda _{1}\max_{x\in \overline{\Omega }}\phi _{1},\left( \max_{x\in 
\overline{\Omega }}w\left( x\right) \right) ^{\frac{1}{\alpha -1}}\right\} .
\end{equation*}%
Then, the function $\underline{u}(x)=\sigma \phi _{1}$ is a subsolution of (%
\ref{dir}) whereas $\overline{u}\left( x\right) =cw(x)$ is a supersolution.
Indeed, 
\begin{align*}
-\Delta \underline{u}(x)-\left( \underline{u}(x)\right) ^{\frac{1}{\alpha }%
}& =\sigma \lambda _{1}\phi _{1}\left( x\right) -\left( \sigma \phi
_{1}\right) ^{\frac{1}{\alpha }} \\
& =\sigma \lambda _{1}\phi _{1}\left( x\right) -\sigma ^{\frac{1}{\alpha }%
}\left( \phi _{1}\left( x\right) \right) ^{\frac{1}{\alpha }} \\
& =\sigma \left( \lambda _{1}\phi _{1}\left( x\right) \right) ^{\frac{1}{%
\alpha }}\left[ \left( \lambda _{1}\phi _{1}\left( x\right) \right) ^{\frac{%
\alpha -1}{\alpha }}-\frac{1}{\sigma }\left( \frac{\sigma }{\lambda _{1}}%
\right) ^{\frac{1}{\alpha }}\right] \\
& \leq \sigma \left( \lambda _{1}\phi _{1}\left( x\right) \right) ^{\frac{1}{%
\alpha }}\left[ \left( \lambda _{1}\max_{x\in \overline{\Omega }}\phi
_{1}\left( x\right) \right) ^{\frac{\alpha -1}{\alpha }}-\frac{1}{\sigma }%
\left( \frac{\sigma }{\lambda _{1}}\right) ^{\frac{1}{\alpha }}\right] =0
\end{align*}%
and%
\begin{align*}
-\Delta \overline{u}(x)-\left( \overline{u}\left( x\right) \right) ^{\frac{1%
}{\alpha }}& =c-\left( cw\left( x\right) \right) ^{\frac{1}{\alpha }}\geq
c-\left( c\max_{x\in \overline{\Omega }}w\left( x\right) \right) ^{\frac{1}{%
\alpha }} \\
& =c^{\frac{1}{\alpha }}\left[ c^{\frac{\alpha -1}{\alpha }}-\left(
\max_{x\in \overline{\Omega }}w\left( x\right) \right) ^{\frac{1}{\alpha }}%
\right] \geq 0.
\end{align*}%
Now, since%
\begin{equation*}
\left\{ 
\begin{array}{lll}
-\Delta \lbrack \overline{u}(x)-\underline{u}(x)]=c-\sigma \lambda _{1}\phi
_{1}\geq c-\sigma \lambda _{1}\max_{x\in \overline{\Omega }}\phi _{1}\geq 0
& in & \Omega \\ 
\overline{u}(x)-\underline{u}(x)=0 & on & \partial \Omega%
\end{array}%
\right.
\end{equation*}%
it follows from the maximum principle that $\underline{u}(x)\leq \overline{u}%
(x)$, for all $x\in \overline{\Omega }$.

Next, we define the sequence $\{u^{k}\}_{k\geq 0}$ as 
\begin{equation}
u^{0}(x)=u_{0}(x),\text{ }-\Delta u^{k}\left( x\right) =\left( u^{k-1}\left(
x\right) \right) ^{\frac{1}{\alpha }}\text{ for }k=1,2,...\text{ and }x\in
\Omega  \label{iter}
\end{equation}%
where%
\begin{equation*}
u_{0}(x)=\underline{u}(x)\quad \forall \,x\in \overline{\Omega },
\end{equation*}%
with $\underline{u}(x)$ being a subsolution of the problem 
\begin{equation}
-\Delta u(x)=u(x)^{\frac{1}{\alpha }},\quad u(x)>0\text{ in }\Omega ,\qquad
u(x)=0\text{ on }\partial \Omega .  \label{dir2}
\end{equation}%
Using the Brezis-Oswald inequality \cite[Theorems 1 and 2]{BO}, we derive 
\begin{equation}
\underline{u}(x)\leq u^{0}(x)\leq u^{1}(x)\leq u^{2}(x)\leq \cdots \leq 
\overline{u}(x),\quad \forall \,x\in \overline{\Omega }.  \label{ineq}
\end{equation}%
Because the sequence $\{u^{k}\}_{k\geq 0}$ is monotonically increasing and
bounded above (by $\overline{u}(x)$), it follows from the monotone
convergence theorem that the pointwise limit%
\begin{equation*}
u(x)=\lim_{k\rightarrow \infty }u^{k}(x),
\end{equation*}%
exists for all $x\in \overline{\Omega }$. Standard regularity theory, which
can be found in \cite{G}, ensures that $u(x)\in C^{2}(\Omega )\cap C^{1}(%
\overline{\Omega })$ and, by construction, satisfies the required conditions
for problem \eqref{dir}. 
\begin{equation*}
\underline{u}(x)\leq u(x)\leq \overline{u}(x)\quad \forall \,x\in \overline{%
\Omega }.
\end{equation*}%
The uniqueness of the solution to problem \eqref{dir} follows again from the
results established by Brezis and Oswald \cite[Theorems 1 and 2]{BO} (see
also \cite{AP} or \cite{BK}).

Now, let us prove the validity of \eqref{2}. Assume that a solution of the
form 
\begin{equation}
v(x,t)=u^{\frac{1}{\alpha }}(x)\,S(t).  \label{sep}
\end{equation}%
Then we have%
\begin{equation*}
v^{\alpha }(x,t)=u(x)\,S^{\alpha }(t).
\end{equation*}%
Substitute (\ref{sep}) into (\ref{nd}) to obtain: 
\begin{equation}
u^{\frac{1}{\alpha }}(x)\,S^{\prime }(t)-S^{\alpha }(t)\,\Delta u\left(
x\right) =\mu \left( t\right) u^{\frac{1}{\alpha }}(x)\,S(t).  \label{ec}
\end{equation}%
Dividing (\ref{ec}) by $u^{\frac{1}{\alpha }}(x)\,$, we have 
\begin{equation}
S^{\prime }(t)-S^{\alpha }(t)\,\frac{\Delta u\left( x\right) }{u^{\frac{1}{%
\alpha }}(x)}=\mu \left( t\right) S\left( t\right) ,  \label{div}
\end{equation}%
or, equivalently%
\begin{equation}
S^{\prime }(t)+S^{\alpha }(t)\,=\mu \left( t\right) S\left( t\right) ,
\label{ber}
\end{equation}%
a Bernoulli-type equation. Multiplying both sides of (\ref{ber}) by $%
S^{-\alpha }(t)$ yields 
\begin{equation*}
S^{\prime }(t)S^{-\alpha }(t)\,=\mu \left( t\right) S^{1-\alpha }\left(
t\right) -1.
\end{equation*}%
We now solve this equation by the standard substitution. Define 
\begin{equation}
z(t)=S(t)^{\,1-\alpha }.  \label{sub}
\end{equation}%
Then, by differentiating in (\ref{sub}), we have 
\begin{equation*}
z^{\prime }(t)=\left( 1-\alpha \right) S^{-\alpha }(t)S^{\prime }\left(
t\right) \Longrightarrow S^{-\alpha }(t)S^{\prime }\left( t\right) =\frac{%
z^{\prime }(t)}{1-\alpha }.
\end{equation*}%
Substitute the expression for $S^{-\alpha }(t)S^{\prime }\left( t\right) $
into the above: 
\begin{equation*}
\frac{z^{\prime }(t)}{1-\alpha }=\mu \left( t\right) z(t)-1.
\end{equation*}%
This simplifies to 
\begin{equation}
z^{\prime }(t)=\left( 1-\alpha \right) \mu \left( t\right) z(t)+\alpha -1.
\label{lin1}
\end{equation}%
\bigskip Multiplying both sides of (\ref{lin1}) by $e^{-\left( 1-\alpha
\right) \int_{0}^{t}\mu \left( s\right) ds}$ yields%
\begin{equation*}
\left( z\left( t\right) e^{-\left( 1-\alpha \right) \int_{0}^{t}\mu \left(
s\right) ds}\right) ^{\prime }=\left( \alpha -1\right) e^{-\left( 1-\alpha
\right) \int_{0}^{t}\mu \left( s\right) ds},
\end{equation*}%
from where%
\begin{equation*}
z\left( t\right) =e^{\left( 1-\alpha \right) \int_{0}^{t}\mu \left( s\right)
ds}\int \left( \alpha -1\right) e^{-\left( 1-\alpha \right) \int_{0}^{t}\mu
\left( s\right) ds}dt.
\end{equation*}%
But recall that $S(t)^{\,1-\alpha }=z(t)$; hence, 
\begin{equation}
S\left( t\right) =\left[ e^{\left( 1-\alpha \right) \int_{0}^{t}\mu \left(
s\right) ds}\left( \gamma ^{1-\alpha }+(\alpha -1)\int_{0}^{t}e^{-\left(
1-\alpha \right) \int_{0}^{\tau }\mu \left( s\right) ds}d\tau \right) \right]
^{\frac{1}{1-\alpha }}.  \label{ff}
\end{equation}%
Replacing (\ref{ff}) in (\ref{sep}) we obtain the form (\ref{2}). This
concludes the proof of Theorem \ref{th1}.
\end{proof}

\begin{remark}
The Python script implementing the construction of the separable solution 
\begin{equation*}
v(x,t)=S(t)\cdot u^{1/\alpha }(x),
\end{equation*}%
as established in Theorem~\ref{th1}, is publicly available at \cite{TH1}.
\end{remark}

\begin{remark}
\label{rem-struct}The proof of Theorem~\ref{th1} reveals the essential
mechanism behind the Bernoulli localization principle: the substitution of
the product ansatz \eqref{sep} into the parabolic PDE \eqref{nd}
automatically produces the Bernoulli ODE \eqref{ber}, because the spatial
dependence cancels exactly when $u$ solves the elliptic problem \eqref{dir}.
This cancellation is not an accident; it is a structural feature of the
power-law nonlinearity $v^{\alpha}$, which ensures that $\Delta
v^{\alpha}=S^{\alpha}\Delta u$ is proportional to $v$ through the elliptic
equation. This structural observation is the conceptual heart of the
localization principle.
\end{remark}

We then establish a proof of the comparison principle.

\begin{proof}[Proof of Theorem \protect\ref{th-comp}]
Let 
\begin{equation*}
\mathcal{L}_{\mu }[u]:=\frac{\partial u}{\partial t}-\Delta u^{\alpha }-\mu
(t)u
\end{equation*}
be the parabolic operator associated with \eqref{nd}. By hypothesis, $v$ and 
$w$ are unique classical solutions satisfying 
\begin{equation*}
\mathcal{L}_{\mu _{1}}[v]=0\text{ and }\mathcal{L}_{\mu _{2}}[w]=0\text{ in }%
\Omega \times (0,T).
\end{equation*}
We first establish that $v$ serves as a subsolution for the operator $%
\mathcal{L}_{\mu _{2}}$. Indeed, substituting $v$ into $\mathcal{L}_{\mu
_{2}}$ yields: 
\begin{equation}
\mathcal{L}_{\mu _{2}}[v]=\frac{\partial v}{\partial t}-\Delta v^{\alpha
}-\mu _{2}(t)v=\mu _{1}(t)v-\mu _{2}(t)v=(\mu _{1}(t)-\mu _{2}(t))v.
\end{equation}%
Because $\mu _{1}(t)\leq \mu _{2}(t)$ for all $t$ and $v(x,t)\geq 0$ in $%
\Omega \times (0,T)$, it follows that $(\mu _{1}(t)-\mu _{2}(t))v\leq 0$,
which implies: 
\begin{equation}
\frac{\partial v}{\partial t}-\Delta v^{\alpha }\leq \mu _{2}(t)v\quad \text{%
in }\Omega \times (0,T).
\end{equation}%
On the other hand, the solution $w$ satisfies: 
\begin{equation}
\frac{\partial w}{\partial t}-\Delta w^{\alpha }=\mu _{2}(t)w\quad \text{in }%
\Omega \times (0,T).
\end{equation}%
Furthermore, the initial and boundary conditions satisfy 
\begin{equation*}
v(x,0)=v_{0}(x)\leq w_{0}(x)=w(x,0)\text{ in }\Omega \text{,}
\end{equation*}%
and 
\begin{equation*}
v(x,t)=0=w(x,t)\text{ on }\partial \Omega \times (0,T).
\end{equation*}

To establish the ordering $v\leq w$, we define 
\begin{equation*}
z(x,t):=v(x,t)-w(x,t)
\end{equation*}
and note that $z$ satisfies: 
\begin{equation}
\frac{\partial z}{\partial t}-\Delta (v^{\alpha }-w^{\alpha })=\mu
_{1}(t)v-\mu _{2}(t)w\leq \mu _{2}(t)(v-w)=\mu _{2}(t)z.
\end{equation}%
Using the monotonicity of the mapping $s\mapsto s^{\alpha }$ for $\alpha >1$
and $s\geq 0$, we can express the difference $v^{\alpha }-w^{\alpha }$ as $%
c(x,t)(v-w)$ where 
\begin{equation*}
c(x,t)=\int_{0}^{1}\alpha \lbrack w+\theta (v-w)]^{\alpha -1}d\theta \geq 0.
\end{equation*}%
Substituting this into the inequality for $z$, we obtain: 
\begin{equation}
\frac{\partial z}{\partial t}-\Delta (c(x,t)z)-\mu _{2}(t)z\leq 0.
\end{equation}%
Given that $z(x,0)\leq 0$ and $z$ vanishes at the boundary, the application
of the maximum principle for porous-medium-type equations (see \cite[Chapter
3]{V} or \cite[Theorem 2.1]{PAO}) ensures that $z(x,t)\leq 0$ on $\overline{%
\Omega }\times \lbrack 0,T)$. Consequently, $v(x,t)\leq w(x,t)$ for all $%
(x,t)\in \overline{\Omega }\times \lbrack 0,T)$, completing the proof.
\end{proof}

We conclude this section by establishing the proof of the third main result.

\begin{proof}[Proof of Theorem \protect\ref{th2}]
First we compute the quantities appearing in the parabolic operator for the
functions $v_{\pm }$.

\noindent \textbf{Step 1: Structure of $v_{\gamma }$.} Let $u$ be any of $%
u_{-}$ or $u_{+}$, and define%
\begin{equation*}
v_{\gamma }(x,t):=S(t)\,u^{1/\alpha }(x).
\end{equation*}%
Then%
\begin{equation*}
v_{\gamma }^{\alpha }(x,t)=\bigl(S(t)\,u^{1/\alpha }(x)\bigr)^{\alpha
}=S(t)^{\alpha }\,u(x).
\end{equation*}%
Hence, using that $S$ depends only on $t$,%
\begin{equation*}
\Delta \bigl(v_{\gamma }^{\alpha }(x,t)\bigr)=\Delta \bigl(S(t)^{\alpha }u(x)%
\bigr)=S(t)^{\alpha }\,\Delta u(x).
\end{equation*}%
Moreover,%
\begin{equation*}
\frac{\partial v_{\gamma }}{\partial t}(x,t)=S^{\prime }\left( t\right)
u^{1/\alpha }(x).
\end{equation*}%
Therefore, the left-hand side of the parabolic operator applied to $%
v_{\gamma }$ is 
\begin{align*}
\frac{\partial v_{\gamma }}{\partial t}(x,t)-\Delta \bigl(v_{\gamma
}^{\alpha }(x,t)\bigr)-\mu (t)\,v_{\gamma }(x,t)& =S^{\prime }\left(
t\right) u^{1/\alpha }(x)-S(t)^{\alpha }\,\Delta u(x)-\mu
(t)\,S(t)\,u^{1/\alpha }(x) \\
& =\bigl(S^{\prime }(t)-\mu (t)S(t)\bigr)\,u^{1/\alpha }(x)-S(t)^{\alpha
}\,\Delta u(x).
\end{align*}%
\noindent \textbf{Step 2: Use of the elliptic inequalities.}

\emph{(a) Subsolution case.} Assume $u=u_{-}$ satisfies \eqref{ell-sub}, i.e.%
\begin{equation*}
-\Delta u_{-}\leq u_{-}^{1/\alpha }\quad \text{in }\Omega .
\end{equation*}%
This is equivalent to%
\begin{equation*}
\Delta u_{-}\geq -u_{-}^{1/\alpha }\quad \text{in }\Omega .
\end{equation*}%
Then, for $v_{-}(x,t)=S(t)\,u_{-}^{1/\alpha }(x)$, we have 
\begin{align*}
\frac{\partial v_{-}}{\partial t}-\Delta \bigl(v_{-}^{\alpha }\bigr)-\mu
(t)\,v_{-}& =\bigl(S^{\prime }(t)-\mu (t)S(t)\bigr)\,u_{-}^{1/\alpha
}-S(t)^{\alpha }\,\Delta u_{-} \\
& \leq \bigl(S^{\prime }(t)-\mu (t)S(t)\bigr)\,u_{-}^{1/\alpha
}+S(t)^{\alpha }\,u_{-}^{1/\alpha } \\
& =\bigl(-S^{\alpha }(t)\bigr)\,u_{-}^{1/\alpha }+S(t)^{\alpha
}\,u_{-}^{1/\alpha }=0,
\end{align*}%
where we have used \eqref{S-ODE}. Therefore,%
\begin{equation*}
\frac{\partial v_{-}}{\partial t}-\Delta \bigl(v_{-}^{\alpha }\bigr)-\mu
(t)\,v_{-}\leq 0\quad \text{in }\Omega \times (0,T),
\end{equation*}%
which shows that $v_{-}$ is a subsolution of \eqref{nd}. On the boundary,
because $u_{-}=0$ on $\partial \Omega $, we have 
\begin{equation*}
v_{-}(x,t)=S(t)\,0^{1/\alpha }=0
\end{equation*}%
on $\partial \Omega \times (0,T)$.

\emph{(b) Supersolution case.} Assume $u=u_{+}$ satisfies \eqref{ell-super},
that is%
\begin{equation*}
-\Delta u_{+}\geq u_{+}^{1/\alpha }\quad \text{in }\Omega ,
\end{equation*}%
equivalently%
\begin{equation*}
\Delta u_{+}\leq -u_{+}^{1/\alpha }\quad \text{in }\Omega .
\end{equation*}%
Then, for $v_{+}(x,t)=S(t)\,u_{+}^{1/\alpha }(x)$, we obtain 
\begin{align*}
\frac{\partial v_{+}}{\partial t}-\Delta \bigl(v_{+}^{\alpha }\bigr)-\mu
(t)\,v_{+}& =\bigl(S^{\prime }(t)-\mu (t)S(t)\bigr)\,u_{+}^{1/\alpha
}-S(t)^{\alpha }\,\Delta u_{+} \\
& \geq \bigl(S^{\prime }(t)-\mu (t)S(t)\bigr)\,u_{+}^{1/\alpha
}+S(t)^{\alpha }\,u_{+}^{1/\alpha } \\
& =\bigl(-S^{\alpha }(t)\bigr)\,u_{+}^{1/\alpha }+S(t)^{\alpha
}\,u_{+}^{1/\alpha }.
\end{align*}%
Again, by \eqref{S-ODE}, the factor in parentheses vanishes, so%
\begin{equation*}
\frac{\partial v_{+}}{\partial t}-\Delta \bigl(v_{+}^{\alpha }\bigr)-\mu
(t)\,v_{+}\geq 0\quad \text{in }\Omega \times (0,T),
\end{equation*}%
where $v_{+}$ is the supersolution to \eqref{nd}. The boundary condition $%
v_{+}=0$ on $\partial \Omega \times (0,T)$ follows from $u_{+}=0$ on $%
\partial \Omega $.

\noindent \textbf{Step 3: Ordering of the solution via comparison.}

Assume now that $v$ is a classical solution of \eqref{nd} with%
\begin{equation*}
v(x,0)=v_{0}(x)\quad \text{in }\Omega ,\qquad v(x,t)=0\quad \text{on }%
\partial \Omega \times (0,T),
\end{equation*}%
and that%
\begin{equation*}
v_{-}(x,0)\leq v_{0}(x)\leq v_{+}(x,0)\quad \text{for all }x\in \Omega .
\end{equation*}%
By construction,%
\begin{equation*}
v_{-}(x,t)=S(t)\,u_{-}^{1/\alpha }(x),\qquad
v_{+}(x,t)=S(t)\,u_{+}^{1/\alpha }(x),\quad (x,t)\in \overline{\Omega }%
\times \lbrack 0,T).
\end{equation*}%
Therefore the initial data are sandwiched between the sub- and
supersolutions at time $t=0$. Moreover, for all $x\in \overline{\Omega }$
yields and $t\in \lbrack 0,T)$ we have 
\begin{equation*}
v_{-}(x,t)\leq v_{+}(x,t).
\end{equation*}%
To complete this argument, we invoke classical subsolution--supersolution
theory for nonlinear parabolic equations. We proceeded in several steps.

\noindent \textbf{Step 3.1: Initial ordering.} By hypothesis, the initial
data $v_{0}$ satisfies 
\begin{equation*}
v_{-}(x,0)\leq v_{0}(x)\leq v_{+}(x,0)\quad \text{for all }x\in \overline{%
\Omega }.
\end{equation*}%
Since%
\begin{equation*}
v_{-}(x,0)=\gamma \,u_{-}^{1/\alpha }(x)\text{ and }v_{+}(x,0)=\gamma
\,u_{+}^{1/\alpha }(x),
\end{equation*}%
and because $u_{-}\leq u_{+}$ in $\overline{\Omega }$, this ordering is
consistent with the assumptions.

\noindent\textbf{Step 3.2: Boundary compatibility.} On the boundary $%
\partial\Omega\times(0,T)$, both $v_{-}$ and $v_{+}$ vanish: 
\begin{equation*}
v_{-}(x,t)=S(t)\,u_{-}^{1/\alpha}(x)=0,\qquad
v_{+}(x,t)=S(t)\,u_{+}^{1/\alpha}(x)=0,
\end{equation*}%
since $u_{-}=u_{+}=0$ on $\partial\Omega$. This ensures that both the
subsolution and supersolution are compatible with the homogeneous Dirichlet
boundary condition \eqref{bnd1}.

\noindent \textbf{Step 3.3: Application of comparison principles.} The
theory developed by Pao \cite{PAO} (see also \cite{GM} for related results)
establishes that, under suitable regularity assumptions, if $v_{-}$ and $%
v_{+}$ are respectively an ordered subsolution and supersolution of the
parabolic problem \eqref{nd} with boundary conditions \eqref{bnd1}, and if
they satisfy the initial ordering at $t=0$, then there exists a unique
classical solution 
\begin{equation*}
v\in C^{2,1}(\Omega \times (0,T))\cap C(\overline{\Omega }\times \lbrack
0,T)),
\end{equation*}
of the problem \eqref{nd}--\eqref{bnd1}--\eqref{bnd2} such that 
\begin{equation}
v_{-}(x,t)\leq v(x,t)\leq v_{+}(x,t)\quad \text{for all }(x,t)\in \overline{%
\Omega }\times \lbrack 0,T).  \label{sandwich}
\end{equation}

\noindent \textbf{Step 3.4: Verification of the regularity hypotheses.} The
regularity assumptions required by the theory are satisfied:

\begin{itemize}
\item[(i)] The domain $\Omega$ is of class $C^{3}$ with compact closure;

\item[(ii)] The functions $u_{-},u_{+}\in C^{2}(\Omega)\cap C(\overline{%
\Omega})$ are strictly positive in $\Omega$ and vanish on $\partial\Omega$;

\item[(iii)] The function $S\in C^{1}([0,T])$ is strictly positive (since $%
\gamma>0$ and the ODE \eqref{S-ODE} preserves positivity);

\item[(iv)] The initial data $v_{0}\in C(\overline{\Omega})$ is sandwiched
between $v_{-}(\cdot,0)$ and $v_{+}(\cdot,0)$.
\end{itemize}

These conditions ensure that the nonlinear parabolic operator 
\begin{equation*}
\mathcal{L}[v]:=\frac{\partial v}{\partial t}-\Delta (v^{\alpha })-\mu (t)v
\end{equation*}%
is well defined and the subsolution--supersolution method applies.

\noindent\textbf{Step 3.4.1: Monotone iterative construction.} To make the
existence proof constructive and suitable for numerical implementation, we
describe the monotone iteration scheme following Pao \cite{PAO}. This
provides an algorithm for approximating the solution $v$ via sequences that
converge monotonically from below and above.

\emph{(a) Choice of monotonicity constant.} We first choose a function $%
c(x,t)\geq 0$ that is sufficiently large to ensure monotonicity.
Specifically, we consider the nonlinearity in \eqref{nd}, as follows: 
\begin{equation*}
f(v,t):=\mu (t)v.
\end{equation*}%
Since $\frac{\partial f}{\partial v}=\mu (t)$, we can take 
\begin{equation}
c(t):=\max \{0,\,\sup_{s\in \lbrack 0,t]}|\mu (s)|\},  \label{c-choice}
\end{equation}%
where clearly in the case $\mu (s)>0$ we obtain $c(t)=\sup_{s\in \lbrack
0,t]}\mu (s)$. This ensures that the auxiliary function 
\begin{equation*}
F(v,t):=c(t)\,v+f(v,t)=\bigl(c(t)+\mu (t)\bigr)v
\end{equation*}%
is nondecreasing in $v$ for $v\in \lbrack v_{-},v_{+}]$.

\emph{(b) Iterative scheme.} We define the two sequences $%
\{v_{-}^{(m)}\}_{m=0}^{\infty }$ and $\{v_{+}^{(m)}\}_{m=0}^{\infty }$ as
follows. Initialize with 
\begin{equation}
v_{-}^{(0)}(x,t):=v_{-}(x,t),\qquad v_{+}^{(0)}(x,t):=v_{+}(x,t).
\label{init-seq}
\end{equation}%
For $m\geq 1$, construct $v_{-}^{(m)}$ and $v_{+}^{(m)}$ by solving the
linearized parabolic problem 
\begin{equation}
\begin{aligned} \frac{\partial v^{(m)}}{\partial
t}-\Delta\bigl((v^{(m)})^{\alpha}\bigr)+c(t)\,v^{(m)}
&=c(t)\,v^{(m-1)}+\mu(t)\,v^{(m-1)}\quad\text{in }\Omega\times(0,T),\\
v^{(m)}(x,t)&=0\quad\text{on }\partial\Omega\times(0,T),\\
v^{(m)}(x,0)&=v_{0}(x)\quad\text{in }\Omega. \end{aligned}
\label{iter-scheme}
\end{equation}%
Here, the superscript $(m)$ denotes the iteration index, and $v^{(m-1)}$ is
the previous iterate.

\emph{(c) Monotonicity and convergence.} According to the comparison principle
for parabolic equations and the choice of $c(t)$, the sequences satisfy 
\begin{equation}
v_{-}=v_{-}^{(0)}\leq v_{-}^{(1)}\leq v_{-}^{(2)}\leq \cdots \leq
v_{-}^{(m)}\leq \cdots \leq v_{+}^{(m)}\leq \cdots \leq v_{+}^{(2)}\leq
v_{+}^{(1)}\leq v_{+}^{(0)}=v_{+}.  \label{monotone-order}
\end{equation}%
That is, the lower sequence $\{v_{-}^{(m)}\}$ monotonically increases and is
bounded above by $v_{+}$, whereas the upper sequence $\{v_{+}^{(m)}\}$
monotonically decreases and is bounded below by $v_{-}$. Standard parabolic
regularity estimates (see \cite{PAO}, Theorem 2.1) ensure that both
sequences converge uniformly on compact subsets of $\overline{\Omega }\times
\lbrack 0,T)$ to limits 
\begin{equation*}
\underline{v}(x,t):=\lim_{m\rightarrow \infty }v_{-}^{(m)}(x,t),\qquad 
\overline{v}(x,t):=\lim_{m\rightarrow \infty }v_{+}^{(m)}(x,t).
\end{equation*}%
Moreover, both $\underline{v}$ and $\overline{v}$ are classical solutions of %
\eqref{nd}--\eqref{bnd1}--\eqref{bnd2}, and they are the minimal and maximal
solutions, in the ordered interval $[v_{-},v_{+}]$.

\emph{(d) Uniqueness implies convergence to the same limit.} If the solution
is unique (as established in Step 3.5), then $\underline{v}\equiv\overline{v}%
\equiv v$, and both sequences converge to the same solution: 
\begin{equation}
\lim_{m\to\infty}v^{(m)}_{\pm}(x,t)=v(x,t)\quad\text{uniformly on }\overline{%
\Omega} \times[0,T).  \label{unique-conv}
\end{equation}

This constructive approach is particularly well-suited for numerical
implementation. At each iteration $m$, one solves a \emph{linear} parabolic
problem (because the right-hand side depends only on the previous iterate $%
v^{(m-1)}$), which can be discretized using standard finite difference or
finite element methods. Monotone convergence \eqref{monotone-order} provides
a natural stopping criterion based on the $L^{\infty }$ or $L^{2}$ norm of
the difference $\Vert v^{(m)}-v^{(m-1)}\Vert $.

\noindent \textbf{Step 3.5: Uniqueness. }The\textbf{\ }uniqueness of the
classical solution is derived from the monotonicity of the operator.
Specifically, if $v^{(1)}$ and $v^{(2)}$ are two solutions that lie in the
interval $[v_{-},v_{+}]$, then the difference $w:=v^{(1)}-v^{(2)}$ satisfies 
\begin{equation*}
\frac{\partial w}{\partial t}-\text{div}[\nabla (v^{(1)\alpha }-v^{(2)\alpha
})]-\mu (t)w=0.
\end{equation*}%
By mean-value expansions of the nonlinear term and the maximum principle for
parabolic equations, we conclude that $w\equiv 0$, yielding uniqueness.

\noindent \textbf{Step 3.6: Conclusion.} We have shown that $v_{-}$ and $%
v_{+}$ are respectively a subsolution and a supersolution of \eqref{nd} with
boundary condition \eqref{bnd1}, that they are ordered, and that the initial
data lie between them. Invoking the standard subsolution--supersolution
existence theorem (see \cite{PAO}, Theorem 1.3 and related results, or \cite%
{GM}, Section 3), we conclude that there exists a unique classical solution $%
v$ of problem \eqref{nd}--\eqref{bnd1}--\eqref{bnd2} satisfying the
pointwise estimate \eqref{sandwich}. This completes this proof.
\end{proof}

\begin{remark}
The Python code available at: \cite{TH2} implements Theorem~\ref{th2} using
the subsolution--supersolution framework. In the special case where the
solution admits a separable form 
\begin{equation*}
v(x,t)=S(t)\cdot u^{1/\alpha }(x),
\end{equation*}%
as in Theorem~\ref{th1}, this implementation confirms that both theorems
lead to identical numerical outcomes.
\end{remark}

\begin{remark}
The Python script available at: \cite{R} extends the implementation to illustrate Theorem~\ref{th2} under the general initial
condition proposed in Remark~\ref{rem}. In particular, the parabolic solver
is initialized with a weighted average of the sub- and supersolutions, 
\begin{equation*}
v_{0}(x)=\frac{7}{8}v_{-}(x,0)+\frac{1}{8}v_{+}(x,0),
\end{equation*}%
thereby demonstrating that the monotone iteration scheme robustly captures
the existence and uniqueness of the solutions for arbitrary initial data
within the ordered interval $[v_{-},v_{+}]$.
\end{remark}

In the following section, we present the derivation of the model from
fundamental conservation principles, connecting it to the classical
framework of Gurtin and MacCamy.


\section{Derivation of the Model via Conservation Laws \label{ef}}

The nonlinear diffusion equation \eqref{nd} can be derived from the
fundamental conservation law: 
\begin{equation}
\frac{\partial v}{\partial t}+\nabla \cdot \mathbf{J}=f(v,t),  \label{cons}
\end{equation}%
where $v(x,t)$ is the population density, $\mathbf{J}$ is the population
flux, and $f(v,t)$ is the net growth rate. The following constitutive
function is considered: 
\begin{equation*}
\varphi :[0,\infty )\rightarrow \lbrack 0,\infty ),
\end{equation*}%
which is differentiable in $v$ and satisfies the conditions: 
\begin{equation*}
\varphi ^{\prime }(0)=0,\quad \varphi ^{\prime }(v)>0\quad \text{for }v>0.
\end{equation*}%
In our model, the population flux $\mathbf{J}$ is governed by the potential $%
\varphi $, following a nonlinear Fick's law: 
\begin{equation}
\mathbf{J}=-\nabla \varphi (v)=-\varphi ^{\prime }(v)\nabla v.  \label{fick}
\end{equation}%
This indicates that dispersal is driven by the gradient of the potential $%
\varphi (v)$. Substituting \eqref{fick} into \eqref{cons} yields the
diffusion term $\nabla \cdot (\varphi ^{\prime }(v)\nabla v)$.

This flux-driven dynamic can be connected to a variational
principle. The Euler--Lagrange conditions associated with the functional are
inspired by the population flux $\mathbf{J}$, which suggest that the system
evolves to minimize the associated energy. We consider: 
\begin{equation*}
E(v)=\int_{\Omega }\left[ \frac{1}{2}\Bigl(\varphi ^{\prime }(v)\Bigr)^{2}+%
\frac{\mu \left( t\right) }{2}v^{2}\right] dx.
\end{equation*}%
Minimization of the energy $E(v)$, by using a flow in the positive direction
of the functional derivative, leads to: 
\begin{equation*}
\frac{\partial v}{\partial t}=\frac{\delta E}{\delta v}=\nabla \cdot \Bigl(%
\varphi ^{\prime }(v)\nabla v\Bigr)+\mu \left( t\right) v.
\end{equation*}%
Here, the variational derivation recovers the exact form of the flux term
derived from conservation laws. We observe that the diffusive term $\Delta
\varphi (v)$ can be expressed in the form: 
\begin{equation*}
\Delta \varphi (v)=\nabla \cdot \Bigl(\varphi ^{\prime }(v)\nabla v\Bigr).
\end{equation*}%
By choosing the specific function $\varphi (v)=v^{\alpha }$, the equation (%
\ref{nd}) can be obtained after rearrangement. The interpretations are as
follows: the diffusive term $\Delta v^{\alpha }$ describes the dispersal of
individuals in a habitat (representing the way in which the population
spreads out in space), whereas the term $\mu \left( t\right) v$ represents
the effect of population growth or mortality, modeling the birth and death
processes. The boundary conditions (\ref{bnd1}) and the initial condition (%
\ref{bnd2}) complement the model by ensuring that the initial population
distribution is predefined and that there is no flux of individuals across
the boundary of the domain.


\section{Cross-Disciplinary Relevance \label{cross}}

The Bernoulli localization principle developed in this paper is not confined
to a single application domain. In this section we show how Equation~%
\eqref{nd} and its structural decomposition via the Bernoulli ODE are
relevant to physics, biology, economics, and engineering. For each
discipline we identify the physical model, specify what the variables
represent, explain how the localization principle applies, and state a
concrete, verifiable prediction.

\subsection{Physics: thermal diffusion in porous media}

In the physics of porous media, the temperature $v(x,t)$ in a material with
temperature-dependent thermal conductivity $k(v)=\alpha v^{\alpha -1}$
satisfies the nonlinear heat equation%
\begin{equation}
\frac{\partial v}{\partial t}=\nabla \cdot (k(v)\nabla v)+Q(t)\,v=\Delta
v^{\alpha }+Q(t)\,v,  \label{thermal}
\end{equation}%
where $Q(t)$ is a time-dependent volumetric heat source (or sink). This is
precisely Equation~\eqref{nd} with $\mu (t)=Q(t)$. The power-law
dependence $k(v)=\alpha v^{\alpha -1}$ arises in radiative heat transfer ($%
\alpha =4$, Stefan--Boltzmann regime) and in the thermal conductivity of
certain polymers and ceramics \cite{Bar, DiBenedetto}.

\textbf{Verifiable prediction.} For a porous medium with thermal
conductivity $k(v)=\alpha v^{\alpha -1}$ (e.g., $\alpha =4$ for radiation-%
dominated transfer), initial temperature profile $v_{0}(x)$, and
time-dependent heat source $Q(t)=Q_{0}/(1+t)$, the Bernoulli localization
principle predicts that the temperature at any point $x$ and time $t$
satisfies%
\begin{equation*}
S(t)\,u_{-}^{1/\alpha }(x)\leq v(x,t)\leq S(t)\,u_{+}^{1/\alpha }(x),
\end{equation*}%
where $u_{-}$ and $u_{+}$ are the elliptic sub- and supersolutions on the
spatial domain, and $S(t)$ is computed from the closed-form expression %
\eqref{2} with $\mu (t)=Q_{0}/(1+t)$. These bounds can be tested against
numerical simulations or experimental measurements of heat flow in porous
ceramics.

\subsection{Biology: population dynamics and tumor growth}

Equation~\eqref{nd} was originally introduced by Gurtin and MacCamy \cite{GM}
to model the dispersal of biological populations with density-dependent
diffusion. In this context, $v(x,t)$ represents the population density, $%
\alpha >1$ captures the fact that organisms in crowded regions disperse more
rapidly (the ``slow diffusion'' regime, where the diffusion coefficient
vanishes when the density is zero, leading to compact support and finite-speed propagation), and $\mu (t)$ represents a time-varying net
reproduction rate.

A particularly important biological application is \textbf{tumor growth
modeling}. The density of tumor cells in a tissue region $\Omega $ evolves
according to a porous-medium-type equation, where the power-law diffusion $%
\Delta v^{\alpha }$ captures the mechanical pressure exerted by the growing
tumor mass, and $\mu (t)$ represents a time-varying proliferation rate that
may decrease due to nutrient depletion or therapeutic intervention \cite%
{Murray}.

\textbf{Verifiable prediction.} For a tumor with initial density profile $%
v_{0}(x)$ and proliferation rate $\mu (t)=\mu _{0}e^{-\beta t}$
(representing exponential decay due to chemotherapy), the maximum tumor cell
density at time $t$ is bounded above by $S(t)\max_{x}u_{+}^{1/\alpha }(x)$,
where $S(t)$ is the explicit Bernoulli solution~\eqref{2} with the given $%
\mu (t)$. This prediction can be tested against \emph{in vitro} spheroid
growth data.

\subsection{Economics and social sciences: diffusion of innovations}

The spread of a new technology, product, or idea through a population can be
modeled by nonlinear diffusion equations \cite{Rogers}. In this framework, $%
v(x,t)$ represents the density of adopters at location $x$ and time $t$,
the power-law diffusion $\Delta v^{\alpha }$ captures the nonlinear
``peer-influence'' effect (adoption accelerates when adopter density is
high), and $\mu (t)$ represents a time-varying exogenous stimulus (e.g.,
advertising, policy incentives, or media coverage).

The domain $\Omega $ represents a geographical region, and the Dirichlet
boundary condition $v=0$ on $\partial \Omega $ models the fact that adoption
does not penetrate beyond the market boundary. The Bernoulli localization
principle provides explicit bounds on the adoption density at any time and
location.

\textbf{Verifiable prediction.} For a technology diffusion process with
power-law peer influence ($\alpha =2$) and time-varying advertising
intensity $\mu (t)=\mu _{0}/(1+t)$ (diminishing returns), the total
adoption level $\int_{\Omega }v(x,t)\,dx$ is bounded between $%
S(t)\int_{\Omega }u_{-}^{1/2}(x)\,dx$ and $S(t)\int_{\Omega
}u_{+}^{1/2}(x)\,dx$, where $S(t)$ is given by \eqref{2}. These bounds can
be tested against empirical adoption curves for products such as
smartphones, electric vehicles, or renewable energy technologies.

\subsection{Engineering: groundwater flow and materials processing}

In hydrology, the hydraulic head $v(x,t)$ in an unconfined aquifer with
nonlinear permeability satisfies a porous medium equation of the form %
\eqref{nd}, where $\alpha $ depends on the soil type and $\mu (t)$
represents time-varying recharge or extraction rates \cite{Bear}. The
Bernoulli localization principle provides rigorous upper and lower bounds on
the hydraulic head, which are useful for predicting well yields, assessing
contamination risks, and designing groundwater management strategies.

In materials science, the sintering of powder compacts---a process by which
loose particles consolidate into a dense solid under heat---involves
density-dependent diffusion governed by equations of the form \eqref{nd},
where $v(x,t)$ represents the local density of the compact and $\mu (t)$
represents the time-dependent sintering temperature profile \cite{DiBenedetto%
}. The localization bounds predict the achievable density range at any stage
of the sintering process.

\textbf{Verifiable prediction.} For groundwater flow in a sandy aquifer ($%
\alpha \approx 2$) with seasonal recharge $\mu (t)=\mu _{0}(1+\cos (2\pi
t)) $, the hydraulic head at any monitoring well location $x$ is bounded by
the explicit Bernoulli barriers. These bounds can be compared against field
measurements from piezometric monitoring networks.


\section{Image Denoising Application and Comparison with Perona--Malik \label%
{a4}}

\subsection{Motivation: from biological diffusion to image processing}

From an applied perspective, one of the most striking consequences of the
framework developed in this paper is its direct applicability to digital
image processing. To our knowledge, this constitutes the \textbf{first
application} of a porous-medium-type (Gurtin--MacCamy) diffusion equation to
image denoising.

The connection between diffusion equations and image processing has been
recognized since the seminal work of Perona and Malik \cite{PM1, PM2}, who
proposed anisotropic diffusion as a method for edge-preserving image
smoothing. Their model, however, is based on a \emph{linear} diffusion
equation with a spatially varying, gradient-dependent diffusion coefficient.
The Gurtin--MacCamy model, by contrast, employs a genuinely \emph{nonlinear}
power-law diffusion mechanism $\Delta v^{\alpha }$ that has fundamentally
different mathematical properties: finite-speed propagation, degenerate
parabolicity, and compact support. These properties make it naturally suited
for image denoising, as the nonlinear diffusion automatically slows down
near edges (where the intensity is low after appropriate preprocessing) and
acts more aggressively in homogeneous regions.

\subsection{Mathematical formulation for image denoising}

We treat a grayscale image (or each color channel of a color image) as a
function $v:\Omega _{h}\times [0,T]\rightarrow [0,1]$, where $\Omega
_{h}\subset \mathbb{Z}^{2}$ is the discrete pixel grid. The denoising
process is governed by the discrete version of the Gurtin--MacCamy equation:%
\begin{equation}
v_{i,j}^{n+1}=v_{i,j}^{n}+\Delta t\left[ \Delta _{h}(v^{\alpha
})_{i,j}-\mu (n\Delta t)\,v_{i,j}^{n}\right] ,  \label{discrete-gm}
\end{equation}%
where $\Delta _{h}$ denotes the standard five-point discrete Laplacian%
\begin{equation}
\Delta _{h}(u)_{i,j}=u_{i+1,j}+u_{i-1,j}+u_{i,j+1}+u_{i,j-1}-4u_{i,j}.
\label{disc-lap}
\end{equation}%
In the denoising application, the term $-\mu (t)\,v$ acts as a
time-dependent damping that suppresses noise more aggressively at early
stages (when the noise level is high) and less aggressively as the image
converges to a clean state. The function $\mu (t)=0.3/(1+t)$ implements this
strategy: strong initial absorption that decays over time.

\subsection{Comparison with Perona--Malik anisotropic diffusion}

The Perona--Malik (PM) model \cite{PM1, PM2} is the standard benchmark for
edge-preserving image smoothing. It replaces the linear diffusion $\Delta v$
with the anisotropic operator%
\begin{equation}
\nabla \cdot \bigl(g(|\nabla v|)\,\nabla v\bigr),  \label{pm-eq}
\end{equation}%
where $g(s)=\exp (-s^{2}/K^{2})$ is an edge-stopping function. The PM model
reduces diffusion near edges (where $|\nabla v|$ is large) and maintains
full diffusion in flat regions.

The key difference is that the GM model achieves edge preservation through a
fundamentally different mechanism: the power-law nonlinearity $\Delta
v^{\alpha }$ with $\alpha >1$ automatically reduces the diffusion
coefficient when the intensity $v$ is small, without requiring explicit
gradient estimation. This makes the GM model more robust to noise in the
gradient computation, which is a well-known weakness of the PM approach at
high noise levels.

\subsection{Algorithmic description}

The complete denoising pipeline is summarized in Algorithm~\ref{alg:gm}.

\begin{algorithm}[H]
\caption{Gurtin--MacCamy Image Denoising}
\label{alg:gm}
\begin{algorithmic}[1]
\REQUIRE Clean image $I\in[0,1]^{M\times N\times 3}$, noise level $\sigma$,
exponent $\alpha$, time step $\Delta t$, number of steps $K$, growth
function $\mu(\cdot)$
\ENSURE Denoised image $I_{\mathrm{den}}\in[0,1]^{M\times N\times 3}$
\STATE Generate noisy image: $I_{\mathrm{noisy}} \leftarrow
\mathrm{clip}(I + \sigma\cdot\mathcal{N}(0,1),\;0,\;1)$
\FOR{each color channel $c\in\{R,G,B\}$}
    \STATE $v \leftarrow I_{\mathrm{noisy}}[:,:,c]$
    \FOR{$n = 0,1,\ldots,K-1$}
        \STATE $v_\alpha \leftarrow v^\alpha$ \hfill (elementwise power)
        \STATE $L \leftarrow \Delta_h(v_\alpha)$ \hfill (discrete Laplacian
        via \eqref{disc-lap})
        \STATE $v \leftarrow v + \Delta t\cdot(L - \mu(n\Delta t)\cdot v)$
        \STATE $v \leftarrow \mathrm{clip}(v,\;0,\;1)$
    \ENDFOR
    \STATE $I_{\mathrm{den}}[:,:,c] \leftarrow v$
\ENDFOR
\STATE Apply postprocessing (contrast stretch + gamma correction) to
$I_{\mathrm{den}}$
\RETURN $I_{\mathrm{den}}$
\end{algorithmic}
\end{algorithm}

\subsection{Python implementation: transparent explanation}

We now present the complete Python implementation with a detailed
explanation of each component. The goal is full transparency: every design
choice is justified mathematically or algorithmically, so that the reader
can reproduce, modify, and extend the code.

\subsubsection{Imports and parameters}

\begin{lstlisting}[caption={Imports and model parameters.}]
import numpy as np
import imageio.v2 as imageio
import matplotlib.pyplot as plt
from skimage.metrics import structural_similarity as ssim

alpha_gm = 4
dt_gm = 0.0118
steps_gm = 400

dt_pm = 0.2
steps_pm = 40
K_pm = 0.1

gamma_correction = 1.05
stretch_strength = 0.9
brightness_boost = 1.0

sigma = 0.18

def mu(t):
    return 0.3 / (1 + t)
\end{lstlisting}

\noindent\textbf{Explanation.} The code imports four libraries: \texttt{%
numpy} for array operations, \texttt{imageio} for reading image files, 
\texttt{matplotlib} for visualization, and \texttt{skimage.metrics} for the
SSIM quality metric.

The \emph{Gurtin--MacCamy parameters} are: $\alpha =4$ (the diffusion
exponent, corresponding to the radiation-dominated regime), $\Delta t=0.0118$
(the Euler time step, chosen small enough for stability of the explicit
scheme), and $K=400$ (the number of diffusion steps). The \emph{Perona--Malik
parameters} are: $\Delta t=0.2$, $K=40$, and $K_{\mathrm{pm}}=0.1$ (the
edge-stopping threshold). The postprocessing parameters control contrast
recovery after denoising (see below).

The noise level $\sigma =0.18$ specifies the standard deviation of the
additive Gaussian noise. The function $\mu (t)=0.3/(1+t)$ implements the
time-dependent growth rate from the theoretical framework: it provides
strong damping at early times (when noise is dominant) and progressively
weakens as the image approaches a clean state.

\subsubsection{Image quality metrics}

\begin{lstlisting}[caption={Quality metrics: MSE, PSNR, and SSIM.}]
def compute_mse(a, b):
    return np.mean((a - b)**2)

def compute_psnr(a, b):
    mse = compute_mse(a, b)
    if mse == 0:
        return float("inf")
    return 20 * np.log10(1.0 / np.sqrt(mse))

def compute_ssim_color(a, b):
    return np.mean([
        ssim(a[:,:,c], b[:,:,c], data_range=1.0)
        for c in range(3)
    ])
\end{lstlisting}

\noindent\textbf{Explanation.} Three standard image quality metrics are
implemented.

\begin{itemize}
\item \textbf{Mean Squared Error (MSE):} $\mathrm{MSE}=\frac{1}{MN}%
\sum_{i,j}(a_{i,j}-b_{i,j})^{2}$, measuring the average squared pixel
difference. Lower values indicate better reconstruction.

\item \textbf{Peak Signal-to-Noise Ratio (PSNR):} $\mathrm{PSNR}=20\log
_{10}(1/\sqrt{\mathrm{MSE}})$ dB, measuring the ratio of peak signal to
noise on a logarithmic scale. Higher values indicate better quality; a
difference of 3\,dB corresponds to a factor-of-two improvement in MSE.

\item \textbf{Structural Similarity Index (SSIM):} A perceptual metric that
compares luminance, contrast, and structural information between two images.
Values range from $-1$ to $1$, with $1$ indicating perfect similarity. For
color images, the SSIM is computed per channel and averaged.
\end{itemize}

\subsubsection{Postprocessing}

\begin{lstlisting}[caption={Postprocessing: contrast stretching and gamma
correction.}]
def postprocess(u):
    v = u.copy()
    v_min = v.min()
    v_max = v.max()
    if v_max > v_min:
        v = (v - v_min) / (v_max - v_min)
    v = stretch_strength * v + (1 - stretch_strength) * u
    v = np.power(v, gamma_correction)
    v *= brightness_boost
    return np.clip(v, 0, 1)
\end{lstlisting}

\noindent\textbf{Explanation.} After the diffusion process, the image
intensities may have shifted and compressed. The postprocessing restores
visual quality through three operations:

\begin{enumerate}
\item \textbf{Contrast stretching:} The image is linearly mapped to the
full $[0,1]$ range by $v\mapsto (v-v_{\min })/(v_{\max }-v_{\min })$. This
compensates for the intensity reduction caused by the damping term $-\mu
(t)\,v$ in the diffusion equation.

\item \textbf{Blending:} The stretched image is blended with the original
diffused image using the parameter \texttt{stretch\_strength}$=0.9$:%
\begin{equation*}
v_{\mathrm{out}}=0.9\cdot v_{\mathrm{stretched}}+0.1\cdot v_{\mathrm{%
diffused}}.
\end{equation*}%
This prevents over-enhancement of low-contrast regions.

\item \textbf{Gamma correction:} A mild gamma correction ($\gamma =1.05$)
is applied to adjust the luminance curve, slightly darkening the midtones
for a more natural appearance.
\end{enumerate}

\subsubsection{The Gurtin--MacCamy diffusion kernel}

\begin{lstlisting}[caption={Gurtin--MacCamy nonlinear diffusion for a
single color channel.}]
def diffuse_channel_gm(u, alpha, dt, steps):
    u = u.copy()
    for n in range(steps):
        u_alpha = u**alpha
        lap = (
            np.roll(u_alpha, 1, axis=0) +
            np.roll(u_alpha, -1, axis=0) +
            np.roll(u_alpha, 1, axis=1) +
            np.roll(u_alpha, -1, axis=1) -
            4*u_alpha
        )
        u = u + dt * (lap - mu(n*dt)*u)
        u = np.clip(u, 0, 1)
    return u
\end{lstlisting}

\noindent\textbf{Explanation.} This function implements the core of the
Gurtin--MacCamy denoising algorithm, corresponding to the discrete evolution
equation~\eqref{discrete-gm}. At each time step $n$:

\begin{enumerate}
\item \textbf{Compute $v^{\alpha }$:} The elementwise power $u^{\alpha }$ is
computed. For $\alpha =4$, this strongly attenuates low intensities (noise)
while preserving high intensities (signal).

\item \textbf{Compute the discrete Laplacian:} The five-point stencil %
\eqref{disc-lap} is implemented via \texttt{np.roll}, which shifts the array
along each axis. The sum of four shifted copies minus $4$ times the original
gives the standard finite-difference approximation to $\Delta (v^{\alpha })$.
The use of \texttt{np.roll} implements periodic boundary conditions, which is
standard practice in image processing and avoids boundary artifacts.

\item \textbf{Euler update:} The forward Euler step%
\begin{equation*}
v^{n+1}=v^{n}+\Delta t\bigl[\Delta _{h}(v^{\alpha })-\mu (n\Delta
t)\,v^{n}\bigr]
\end{equation*}%
advances the solution by one time step. The damping term $-\mu (n\Delta
t)\,v^{n}$ implements the time-dependent absorption from the Gurtin--MacCamy
framework.

\item \textbf{Clip to $[0,1]$:} Pixel values are clamped to the valid range
to maintain physical meaningfulness and numerical stability.
\end{enumerate}

The mathematical connection to the theoretical framework is direct: this
function implements a forward Euler discretization of the PDE $\partial
_{t}v=\Delta v^{\alpha }-\mu (t)v$, which corresponds to equation~%
\eqref{nd} with the sign of the reaction term adapted for the absorption
regime appropriate to denoising.

\subsubsection{The Perona--Malik diffusion kernel}

\begin{lstlisting}[caption={Perona--Malik anisotropic diffusion for a
single color channel.}]
def perona_malik_channel(u, dt, steps, K):
    u = u.copy()
    for _ in range(steps):
        ux_f = np.roll(u, -1, axis=1) - u
        ux_b = u - np.roll(u, 1, axis=1)
        uy_f = np.roll(u, -1, axis=0) - u
        uy_b = u - np.roll(u, 1, axis=0)

        cxf = np.exp(-(ux_f**2) / (K**2))
        cxb = np.exp(-(ux_b**2) / (K**2))
        cyf = np.exp(-(uy_f**2) / (K**2))
        cyb = np.exp(-(uy_b**2) / (K**2))

        div = (cxf*ux_f - cxb*ux_b + cyf*uy_f - cyb*uy_b)
        u = u + dt * div
        u = np.clip(u, 0, 1)
    return u
\end{lstlisting}

\noindent\textbf{Explanation.} This function implements the classical
Perona--Malik anisotropic diffusion model \cite{PM1, PM2} for comparison.
At each time step:

\begin{enumerate}
\item \textbf{Compute finite differences:} Forward and backward differences
are computed along both spatial axes (horizontal: $x$, vertical: $y$),
giving four gradient estimates per pixel.

\item \textbf{Edge-stopping function:} The Perona--Malik conductivity $%
g(s)=\exp (-s^{2}/K^{2})$ is applied to each gradient estimate. When the
gradient is large (near an edge), $g\approx 0$ and diffusion is suppressed;
when the gradient is small (in a flat region), $g\approx 1$ and full
diffusion occurs.

\item \textbf{Divergence and update:} The discrete divergence of the flux $%
g\cdot \nabla v$ is computed and the Euler step is applied, as in the
continuous equation $\partial _{t}v=\nabla \cdot (g(|\nabla v|)\nabla v)$.
\end{enumerate}

The Perona--Malik model requires choosing the threshold $K$ carefully: too
small, and noise is preserved; too large, and edges are blurred. This
sensitivity is a recognized limitation of the PM approach, particularly at
high noise levels where gradient estimates are unreliable.

\subsubsection{The complete denoising pipeline}

\begin{lstlisting}[caption={Main pipeline: noise addition, denoising,
and metric computation.}]
img = imageio.imread("lenna.png").astype(np.float32) / 255.0
gauss = sigma * np.random.randn(*img.shape)
noisy_img = np.clip(img + gauss, 0, 1)

den_gm = np.zeros_like(img)
den_pm = np.zeros_like(img)

for c in range(3):
    den_gm[:,:,c] = diffuse_channel_gm(
        noisy_img[:,:,c], alpha_gm, dt_gm, steps_gm
    )
    den_pm[:,:,c] = perona_malik_channel(
        noisy_img[:,:,c], dt_pm, steps_pm, K_pm
    )

den_gm_pp = postprocess(den_gm)
den_pm_pp = postprocess(den_pm)

for label, im in [("Noisy", noisy_img),
                   ("Gurtin-MacCamy", den_gm_pp),
                   ("Perona-Malik", den_pm_pp)]:
    print(f"{label}: MSE={compute_mse(img, im):.6f}, "
          f"PSNR={compute_psnr(img, im):.2f} dB, "
          f"SSIM={compute_ssim_color(img, im):.4f}")
\end{lstlisting}

\noindent\textbf{Explanation.} The main pipeline operates in four stages:

\begin{enumerate}
\item \textbf{Image loading and normalization:} The test image (\texttt{%
lenna.png}) is loaded and normalized to $[0,1]$ by dividing by $255$.

\item \textbf{Noise addition:} Additive white Gaussian noise with standard
deviation $\sigma $ is generated and added to the clean image. The result is
clipped to $[0,1]$ to maintain valid pixel values.

\item \textbf{Channel-wise denoising:} Both the GM and PM denoising
algorithms are applied independently to each of the three color channels
(R, G, B). This channel-independent approach is standard in color image
processing and avoids introducing color artifacts.

\item \textbf{Postprocessing and evaluation:} The postprocessing function
restores contrast to the denoised images, and the three quality
metrics (MSE, PSNR, SSIM) are computed for each method by comparing against
the original clean image.
\end{enumerate}

\subsection{Results and comparison}

The performance of our model, along with the metrics for the noisy image and
the Perona--Malik results, is summarized in Table~\ref{tab:denoising} for two
different noise levels ($\sigma=0.1$ and $\sigma=0.18$).

\begin{table}[tbp]
\caption{Comparative performance metrics for image denoising on \texttt{%
lenna.png}.}
\label{tab:denoising}\centering
\begin{tabular}{llccc}
\toprule Noise Level & Metric & Noisy Image & Gurtin--MacCamy & Perona--Malik
\\ 
\midrule $\sigma = 0.1$ & MSE & 0.009564 & \textbf{0.002459} & 0.002529 \\ 
& PSNR (dB) & 20.19 & \textbf{26.09} & 25.97 \\ 
& SSIM & 0.2960 & \textbf{0.6583} & 0.6355 \\ 
\midrule $\sigma = 0.18$ & MSE & 0.028305 & \textbf{0.004723} & 0.015066 \\ 
& PSNR (dB) & 15.48 & \textbf{23.26} & 18.22 \\ 
& SSIM & 0.1533 & \textbf{0.5249} & 0.2226 \\ 
\bottomrule &  &  &  & 
\end{tabular}%
\end{table}

\subsection{Discussion of the denoising results}

The results in Table~\ref{tab:denoising} reveal several important
observations.

\textbf{Low noise ($\sigma =0.1$).} At moderate noise levels, both the
Gurtin--MacCamy (GM) and Perona--Malik (PM) models achieve substantial
denoising, with the GM model holding a slight advantage across all three
metrics: MSE improves from $0.009564$ (noisy) to $0.002459$ (GM) versus $%
0.002529$ (PM); PSNR increases from $20.19$\,dB to $26.09$\,dB (GM) versus 
$25.97$\,dB (PM); and SSIM increases from $0.2960$ to $0.6583$ (GM) versus $%
0.6355$ (PM). The advantage of the GM model, while modest, is consistent.

\textbf{High noise ($\sigma =0.18$).} At elevated noise levels, the
superiority of the GM model becomes dramatic. The GM model achieves PSNR $%
=23.26$\,dB and SSIM $=0.5249$, while the PM model achieves only PSNR $%
=18.22$\,dB and SSIM $=0.2226$---a difference of over $5$\,dB in PSNR and
more than double the SSIM. The PM model's performance degrades sharply
because its gradient-based edge-stopping function $g(|\nabla v|)$ becomes
unreliable when the noise level is comparable to the gradient magnitudes of
true edges. The GM model, by contrast, does not rely on gradient estimation;
its edge-preserving capability arises from the intrinsic nonlinearity of the
power-law diffusion, which is robust to additive noise.

\textbf{The role of the time-dependent growth rate.} The function $\mu
(t)=0.3/(1+t)$ plays a crucial role in the GM model's success. At early
times, $\mu (t)$ is relatively large, providing strong absorption that
suppresses noise aggressively. As time progresses, $\mu (t)$ decreases,
allowing the diffusion to refine structural details without over-smoothing.
This adaptive behavior, which emerges naturally from the theoretical
framework of Equation~\eqref{nd}, is a distinctive advantage of the GM
approach.

The Python code implementing this comparison is publicly available at \cite%
{ID} for full reproducibility.


\section{Conclusions and Outlook \label{cf}}

\subsection{Summary of contributions}

In this paper, we have introduced the \emph{Bernoulli localization
principle}, a genuinely new approach to the analysis of nonlinear diffusion
equations of porous medium type with time-dependent growth rates. The
principle provides a complete structural description of solutions by
explicitly decoupling temporal dynamics---governed by a Bernoulli-type ODE
with a closed-form solution---from spatial profiles determined by a
stationary sublinear elliptic equation. This decomposition changes the way
the mathematical community can view this class of partial differential
equations: rather than treating the parabolic PDE as an opaque object
accessible only through qualitative comparison principles and energy
estimates, the localization principle reveals its internal structure in
explicit, computable terms.

The main theoretical contributions are:

\begin{enumerate}
\item \textbf{Theorem \ref{th1}:} We proved the existence and uniqueness of
separable classical solutions of the form 
\begin{equation*}
v(x,t)=S(t)u^{1/\alpha }(x),
\end{equation*}%
where the spatial profile $u(x)$ satisfies a sublinear elliptic equation and
the temporal function $S(t)$ is governed by a Bernoulli-type ODE. The
separation-of-variables approach reduces the infinite-dimensional parabolic
PDE to a scalar ODE plus a stationary elliptic problem, yielding explicit
analytical expressions that are directly implementable.

\item \textbf{Theorem \ref{th-comp}:} We established a comparison principle
showing that if the initial population $v_{0}$ is smaller than $w_{0}$ and
the growth rate $\mu _{1}(t)$ is smaller than $\mu _{2}(t)$, then the
solution $v(x,t)$ remains bounded by $w(x,t)$ for all subsequent times.

\item \textbf{Theorem \ref{th2}:} We extended the existence and uniqueness
theory to the general non-separable case using the monotone iteration method
based on subsolution--supersolution techniques, where the sub- and
supersolutions are explicitly constructed from the Bernoulli ODE solution.
This provides concrete, quantitatively sharp localization bounds for every
solution whose initial data lie in the ordered interval $[v_{-},v_{+}]$.

\item \textbf{Image denoising:} We presented the first application of a
porous-medium-type diffusion equation to digital image processing, showing
that the Gurtin--MacCamy model with time-dependent growth rate achieves
denoising performance comparable to or exceeding the classical Perona--Malik
filter, particularly at high noise levels.
\end{enumerate}

\subsection{Impact and broader significance}

The Bernoulli localization principle changes the paradigm for analyzing
porous-medium-type diffusion equations in several ways:

\begin{itemize}
\item It provides a \textbf{structural decomposition} that was not
previously available, making explicit what was formerly implicit.

\item It converts \textbf{qualitative comparison principles} into \textbf{%
quantitative localization bounds} with explicitly computable barriers.

\item It reduces the \textbf{computational complexity} of solving the
parabolic PDE by decoupling the temporal and spatial components.

\item It enables \textbf{direct parameter sensitivity analysis}, since the
dependence on $\alpha $, $\gamma $, and $\mu (\cdot )$ is transparently
readable from the closed-form expressions.
\end{itemize}

The cross-disciplinary relevance of the framework has been demonstrated
through concrete models and verifiable predictions in physics (thermal
diffusion in porous media), biology (population dynamics, tumor growth),
economics (diffusion of innovations), and engineering (groundwater flow,
materials processing). In each case, the Bernoulli localization principle
provides quantitative predictions that can be tested against data.

\subsection{Future research directions}

Several directions for future work are natural:

\begin{enumerate}
\item \textbf{Higher-dimensional domains and complex geometries:} Extension
of the numerical implementation to three-dimensional domains with irregular
boundaries, using finite element discretizations of the elliptic problem.

\item \textbf{Spatially varying diffusion coefficients:} Generalization of
the localization principle to equations of the form $\partial
_{t}v=\nabla \cdot (a(x)\nabla v^{\alpha })+\mu (t)v$, where $a(x)$
models spatial heterogeneity.

\item \textbf{Nonlinear boundary conditions:} Extension to problems with
nonlinear flux conditions on the boundary, which arise in models of
biological membranes and thermal radiation.

\item \textbf{Stochastic perturbations:} Analysis of the robustness of the
localization principle under stochastic forcing, relevant to models with
environmental noise.

\item \textbf{Advanced image processing:} Exploration of the GM diffusion
model for other image processing tasks such as segmentation, inpainting, and
super-resolution, potentially in combination with machine learning
techniques.

\item \textbf{Experimental validation:} Comparison of the theoretical
predictions with experimental data from laboratory measurements of diffusion
in porous materials, biological growth assays, or groundwater monitoring
networks.
\end{enumerate}

\section*{Acknowledgments}

The author thanks the anonymous referees for their careful reading and
helpful suggestions that improved the presentation of this paper.

\section*{Funding}

This research received no external funding.

\section*{Data Availability}

The Python code for all numerical experiments is publicly available at the
URLs listed in the references. The test image used for the denoising
experiments (\texttt{lenna.png}) is a standard benchmark image widely
available in the image processing community.

\section*{Declarations}

Conflicts of Interests: The author declares no conflict of interest.


\end{document}